\newtheorem{thm}{Theorem}[section]
\newtheorem{defn}[thm] {Definition}
\newtheorem{ex}  [thm]{Example}
\newtheorem{cor}[thm]{Corollary}
\theoremstyle{remark}
\newtheorem{rem} [thm]{Remark}
\theoremstyle{definition}
\DeclareMathOperator{\supp}{supp}
\DeclareMathOperator{\mesh}{mesh}
\DeclareMathOperator{\esssup}{esssup}
\DeclareMathAlphabet{\mathcalligra}{T1}{calligra}{m}{n}
\newcommand{\eps}{\varepsilon}
\newcommand{\R}{\mathbb{R}}
\newcommand{\htop}{h_{\text{top}}}
\newcommand{\tx}{\tilde{x}}
\newcommand{\ty}{\tilde{y}}
\begin{document}

\title[On entropy of $\Phi$-irregular and $\Phi$-level sets]{On entropy of $\Phi$-irregular and $\Phi$-level sets in maps with the shadowing property}
\author[M. Fory\'{s}-Krawiec]{Magdalena Fory\'{s}-Krawiec}
\address[M. Fory\'s-Krawiec]{
	National Supercomputing Centre IT4Innovations, Division of the University of Ostrava,
	Institute for Research and Applications of Fuzzy Modeling,
	30. dubna 22, 70103 Ostrava,
	Czech Republic}
\email{magdalena.forys@osu.cz}

\author[J. Kupka]{Jiri Kupka}
\address[J. Kupka]{National Supercomputing Centre IT4Innovations, Division of the University of Ostrava,
	Institute for Research and Applications of Fuzzy Modeling,
	30. dubna 22, 70103 Ostrava,
	Czech Republic}
\email{jiri.kupka@osu.cz}

\author[P. Oprocha]{Piotr Oprocha}
\address[P. Oprocha]{AGH University of Science and Technology, Faculty of Applied
	Mathematics, al.
	Mickiewicza 30, 30-059 Krak\'ow, Poland
	-- and --
	National Supercomputing Centre IT4Innovations, Division of the University of Ostrava,
	Institute for Research and Applications of Fuzzy Modeling,
	30. dubna 22, 70103 Ostrava,
	Czech Republic}
\email{oprocha@agh.edu.pl}

\author[X. Tian]{Xueting Tian}
\address[X. Tian]{School of Mathematical Science, Fundan University, Shanghai 200433, People's Republic of China}
\email{xuetingtian@163.com}
\maketitle

\section{introduction}
Studies on shadowing property have long tradition within theory of dynamical systems. They originated from studies of Bowen and Anosov from 1970s.
These early works brought evidence, that there are strong connections between shadowing property, entropy and ergodic measures
(e.g. see \cite{Bow1}, \cite{Bow2}), bringing motivation for intensive studies lasting last 50 years. It was also Bowen who introduced specification property, a strong mixing condition
that can be observed in mixing maps with shadowing property. This started a very interesting chapter in studies of tracing (pseudo) orbits. In particular, in \cite{Sig1} Sigmund showed that for dynamical systems with periodic specification property the set of ergodic measures is the complement of a set of first category in the space of invariant measures.
In fact, technique developed by Sigmund, which allows to approximate given measure by ergodic measure, is among standard tools nowadays.

It was also observed that regularity of dynamics guaranteed by ergodic theorem, is always complemented by quite ``wild'' or ``irregular'' dynamical behavior. Let us state more precisely what we mean by these irregularities.
For a continuous function $\Phi \in \mathcal{C}(X,\mathbb{R})$ and for every $x \in X$ we define the sum $\frac{1}{n}\sum_{i=0}^{n-1}\Phi(T^ix)$ as a \textit{Birkhoff average}. By the classical Birkhoff ergodic theorem this sum converges for a set of points of full $T$-invariant measure. Those points, for which the Birkhoff average converges are called \textit{$\Phi$-regular points}. The complementary set is called a~\textit{$\Phi$-irregular set} and is denoted by $I_{\Phi}(T)$. The set of all irregular points:
$$
I(T) = \bigcup_{\Phi\in\mathcal{C}(X,\mathbb{R})}I_{\Phi}(T)
$$
is called an \textit{irregular set}. Irregular points are also referred to as \textit{points with historical behavior}, which suggests that points for which Birkhoff average converges present just the average behavior of the system, while the irregular points are capturing the ''complete history'' of the system.
It is clear, that from the ergodic point of view irregular sets are negligible. However if we study their qualitative aspects of dynamics, it turns out their dynamical structure may be quite complicated and interesting. In some cases they may be dynamically even as  complex as the whole space. By \cite{FFW} we know that for symbolic dynamics $I_{\Phi}(T)$ is either empty or has full topological entropy, while in \cite{BSch} the authors showed that for a class of systems containing horseshoes and conformal repellers irregular sets carry full entropy and Hausdorff dimension. Studies of Olsen (e.g. see \cite{O1}, \cite{O2}, \cite{Ow} on deformations and empirical measures, together with the methodology from \cite{TV} motivated the authors of \cite{ChTS} to consider irregular points in systems with specification property. They showed that $I(T)$ has full topological entropy in such case, while the authors of \cite{LW} gave a topological characterization of $\Phi$-irregular set $I_{\Phi}(T)$ in systems with the specification property, by proving that it is either empty or residual.
Later, Thompson made an attempt to weaken the assumptions and proved in \cite{Thom} that for dynamical systems with almost specification the set $I_{\Phi}(T)$ is either empty or carries full topological entropy. In \cite{DOT} the authors consider (possibly not transitive) systems with the shadowing property and proved that $I(T)$ carries full topological entropy when nonempty.

In fact, shadowing property allows to look deeper in the topological structure of the system. In \cite{Mooth} and \cite{MoothO} the authors proved that both collections of uniformly recurrent points and regularly recurrent points are dense in the non-wandering set (that is, Toeplitz minimal systems are dense). Later, Li and Oprocha in \cite{LO} extended these results, showing that points whose orbit closure are odometers are dense in non-wandering set and, moreover, in transitive system with shadowing the collection of ergodic measures supported on odometers is dense in the space of invariant measures. They also proved that ergodic measures supported on Toeplitz systems may approximate other measures in weak*-topology and in terms of value of entropy. While in the case of specification we control only convergence of measures, shadowing property provides deeper insight into the structure of $\omega$-limit sets.

Successful approach in \cite{LO} provides motivation for the present work. We aim for better understanding of local dynamical structure of systems with the shadowing property.
The paper is organized as follows.

In the preliminary section we introduce some basic facts about dynamical systems with the shadowing property, measures and ergodic theory. In the next section we focus on $\Phi$-irregular sets in dynamical systems with the shadowing property.
We estimate value of entropy of $\Phi$-irregular set over chain recurrent set it intersect (see Theorem \ref{thmhtop})
and then use this result to express entropy of nonempty $\Phi$-irregular sets in terms of entropies of chain recurrent sets ( Corollary \ref{cor:CRclass} and \ref{CorSup}).
We also show that $\Phi$-irregular sets of full entropy are typical (Theorem~\ref{thmtyp}).
%
%
%
At the end, we study properties of $\Phi$-level sets (sets of points whose Birkhoff averages are in a given set) and relate it to entropies of some ergodic measures (see Theorem~\ref{LevelSetThm}). We also consider level sets with respect to reference measures, proving local analogs of result of L. Young in \cite{Young}, who considers the problem of large deviations in systems with the specification property (see Theorem \ref{thm:v-}).
While shadowing property provides slightly better control of tracing than specification, we cannot apply global ``gluing'' condition, provided by specification. It is why we have to work in neighborhoods of chain recurrent classes. Surprisingly, despite of local character of our considerations, some global estimates are obtained.

\section{preliminaries}

\subsection{Basic notions and definitions}

A \textit{dynamical system} is a pair $(X,T)$ consisting of a compact metric space $(X,d)$ and a continuous mapping $T:X\rightarrow X$. Let $x\in X$.  By a  \textit{trajectory} of the point $x$ we mean a sequence $\{T^nx\}_{n\geq 0}$, $n\in \mathbb N$, and by an \emph{orbit} of  $x$ we call the set of all iterations, that is the following set:
$$
\{T^nx: n \geq 0\}.
$$
For two integers $L>K\geq0$, by writing $x_{[K,L]}$ we mean a~\textit{block}, i.e.  a finite part of the trajectory of $x$:
$$
T^Kx, T^{K+1}x, \dots, T^{L-1}x, T^{L}x.
$$

For a finite set of indices $\Lambda\subseteq\{0,1,\dots,n-1\}$ and $x,y \in X$ we define a \textit{Bowen distance} of $x,y$ along $\Lambda$ by the following formula:
$$
d_{\Lambda}(x,y) = \max_{j\in \Lambda}\{d(T^{j}x,T^jy)\},
$$
and a \textit{Bowen ball} of radius $\eps>0$ centered in $x\in X$ as the following set:
$$
B_{\Lambda}(x,\eps) = \{y \in X: d_{\Lambda}(x,y)<\eps\}.
$$
In particular, when $\Lambda = \{0,1,\dots,n-1\}$ we denote $d_{\Lambda}(x,y)$ by $d_n(x,y)$ and $B_{\Lambda}(x,\eps)$ by $B_n(x,\eps)$. A set $B_n(x,\eps)$ is then called an $(n,\eps)$-Bowen ball.

\begin{defn}
A dynamical system $(X,T)$ is \emph{topologically transitive} if for every pair of nonempty open sets $U,V \subseteq X$ there exists an integer $M$ such that $T^M(U)\cap~V~\neq \emptyset$.
\end{defn}

\begin{defn}
Given $\delta$>0, a sequence $\{x_n\}_{n \in \mathbb{N}}\subseteq X$ is a \emph{$\delta$-pseudo-orbit} of $T$ if:
	$$
	d(Tx_n,x_{n+1})<\delta \text{ for all } n \in \mathbb{N}.
	$$
A finite $\delta$-pseudo-orbit $\{x_n\}_{n=0}^k$ is often called a \emph{$\delta$-chain} from $x_0$ to $x_k$.
\end{defn}

\begin{defn}
	A map $T:X\rightarrow X$ has a \emph{shadowing property} if for all $\eps>0$ there exists $\delta>0$ such that for any $\delta$-pseudo-orbit $\{x_n\}_{n\in \mathbb{N}}$ there exists a point $y \in X$ such that:
	$$
	d(T^ny,x_n)<\eps \text{ for all }n \in \mathbb{N}.
	$$
	We say that $\delta$-pseudo-orbit $\{x_n\}_{n \in \mathbb{N}}$ is \emph{$\eps$-shadowed} (resp. \emph{$\eps$-traced}) by an orbit of $y$.
\end{defn}

For a point $x \in X$ define its \textit{$\omega$-limit set} $\omega_T (x)$ as the set of limit points of the trajectory of $x$:
$$\omega_T(x) = \bigcap_{n=0}^{\infty} \overline{\{T^k(x): k\geq n\}}.$$
Let $Y^{\omega}$ denote the set of all points whose $\omega$-limit sets are  subsets of $Y$, that is:
$$
Y^{\omega}=\{x \in X: \omega_T(x)\subseteq Y\}.
$$
It is well known that $Y^{\omega}$ need not be compact.
\begin{defn}
	Let $x,y \in X$ and $\eps>0$. If there exist a sequence of points $\{x_i\}_{i=0}^n~\subseteq~X$ and an increasing sequence of positive integers $\{t_i\}_{i=0}^{n-1}$ such that:
	\begin{align*}
	x_0&=x,\\
	x_n&=y,\\
	d(T^{t_i}x_i,x_{i+1})&<\eps \text{ for } i=0,\dots,n-1,
	\end{align*}
	we say that $x$ is in a \emph{chain stable set} of $y$.
\end{defn}

If $x$ is in a chain stable set of $y$ and $y$ is in a chain stable set of $x$ we say that the points $x$ and $y$ are \textit{chain related}. Note that this relation is an equivalence relation. If $x$ is chain related with itself we say that $x$ is a \textit{chain recurrent point}. By $CR(X)\subseteq X$ we denote the set of chain recurrent points in $X$ and by a \textit{chain recurrent class} we call every equivalence class in $CR(X)$ given by the chain relation.

\begin{defn}
	A subset $Y \subseteq X$ is an internally chain transitive set if for all $x,y \in Y$ and for every $\eps>0$ there exist an $\eps$-chain from $x$ to $y$ consisting only of points from $Y$.
\end{defn}

\begin{rem}\label{omegaSub}
Since each chain recurrent class $Y\subseteq X$ is invariant, we always have $Y\subseteq Y^{\omega}$.
\end{rem}
The statement follows from the fact that every chain recurrent class is closed and $T$-invariant ($T(Y) \subseteq Y$).

\begin{defn}
	A point $x \in X$ is $\mu$-generic for a measure $\mu$ if for every continuous mapping $f:X\rightarrow \mathbb{R}$ the following condition holds:
	$$\lim_{n\rightarrow \infty}\frac{1}{n}\sum_{i=0}^{n-1}f(T^ix)=\int_Xfd\mu,$$
	or, equivalently,
	$$
	\frac{1}{n}\sum_{i=0}^{n-1}\delta_{T^ix}=\mu,
	$$
	where $\delta_y$ denotes the Dirac measure on $y$.
\end{defn}

From the above definition it follows that the orbit of every generic point is dense on the support of the measure provided that $x$ belongs to that support. In that case the orbit of the generic point has a~nonempty intersection with every open neighbourhood of that point, hence every generic point is also recurrent.

\subsection{Measures}

For a compact metric space $X$  by $\mathcal{B}$ we denote  the $\sigma$-algebra of Borel subsets of $X$. Let $\mathcal{M}(X)$ be the set of all Borel probability measures on the space $(X,\mathcal{B})$. The \textit{support} of a measure $\mu \in \mathcal{M}(X)$, denoted by $\supp(\mu)$ is the smallest closed subset $C\subset X$ such that $\mu(C)=1$.

For a dynamical system $(X,T)$ we say that a measure $\mu \in \mathcal{M}(X)$ is \textit{$T$-invariant} if $\mu(T^{-1}A)=\mu(A)$ for all $A \in \mathcal{B}$, and $\mu$ is \textit{ergodic} if the only Borel sets $B$ satisfying $T^{-1}B = B$ are sets of zero or full measure, i.e.  $\mu(B)=0$ or $\mu(B)=1$. By $\mathcal{M}_T(X)$ we denote  the set of all $T$-invariant measures and  the set of all ergodic measures on $X$ is denoted by $\mathcal{M}_e(X)$.

By the Riesz representation theorem we may look at $\mathcal{M}(X)$ as a compact metric space with the metric given by the weak$^*$ topology of the dual space $\mathcal{C}(X,\mathbb{R})$. To define the convergence in $\mathcal{M}(X)$, we say that a sequence of measures $\{\mu_n\}_{n \in \mathbb{N}}$ converges to a~measure $\mu \in \mathcal{M}(X)$ in the weak$^*$ topology if the following expression holds for every $\phi \in \mathcal{C}(X,\mathbb{R})$:
$$
\lim_{n\rightarrow\infty}\int\phi d\mu_n = \int \phi d\mu.
$$
Let $BL(X)$ be the set of all bounded Lipschitz real-valued functions on $X$. Note that $BL(X)$ is dense in $\mathcal{C}(X,\mathbb{R})$. Let $\|\varphi\|_{BL} = \|\varphi\|_{\infty}+\|\varphi\|_{L}$, where $\|\varphi\|_{\infty}$ is the supremum norm, and:
$$
\|\varphi\|_{L} = \sup\frac{|\varphi(x)-\varphi(y)|}{d(x,y)}<\infty.
$$
For a countable sequence $\{\varphi_n\}_{n \in \mathbb{N}} \subset \{\varphi \in BL(X): \|\varphi\|_{BL}\leq 1 \}$ and for measures $\mu,\nu~\in~\mathcal{M}(X)$ we define the following metric:
$$
d_{\mathcal{M}}(\mu,\nu) = \sum_{n=1}^{\infty}\frac{1}{2^n}\left|\int \varphi_n d\mu - \int \varphi_n d\nu\right|.
$$
Then $d_{BL}$ is a metric on $\mathcal{M}(X)$ and the topology induced by $d_{BL}$ coincides with the weak$^*$ topology.

\subsection{Metric entropy}

The idea of metric entropy has its motivation in Shannon's information theory. We will present the definition of so-called Kolmogorov-Sinai metric entropy. Let $(X,\mathcal{B},\mu)$ be a probability space, where $\mathcal{B}$ is a $\sigma$-algebra of subsets of $X$. Let $T:X\rightarrow X$ be a measure preserving transformation, i.e.  $T^{-1}A \in~\mathcal{B}$ and $\mu(A) = \mu(T^{-1}A)$ for all $A \in \mathcal{B}$. Let $\alpha = \{A_1,\dots,A_k\}$ be a finite partition of $X$. Put $T^{-i}\alpha = \{ T^{-i}A_1,\dots, T^{-i}A_k\}$. For two partitions $\alpha$ and $\beta$ denote $\alpha\vee\beta = \{A\cap B: A \in \alpha, B \in \beta\}$ and define the set $\bigvee_{i=0}^{n-1}T^{-i}\alpha$ as the collection of sets of the form $\{x: x \in A_{i_0},Tx \in A_{i_1},\dots, T^{n-1}x \in A_{i_{n-1}}\}$ for some $(i_0,i_1,\dots,i_{n-1})$. Now we give a formal definition of the metric entropy using the above notation.
\begin{defn}
The metric entropy of the mapping $T$ is defined as follows:
\begin{align*}
H(\alpha) &= H(\mu(A_1),\dots,\mu( A_k)) = -\sum_{i=0}^{n-1}\mu(A_i)\log \mu(A_i),\\
h_{\mu}(T,\alpha) &= \lim_{n\rightarrow\infty}\frac{1}{n}H(\bigvee_{i=0}^{n-1}T^{-i}\alpha),\\
h_{\mu}(T) &= \sup_{\alpha}h_{\mu}(T,\alpha).
\end{align*}
\end{defn}
Moreover, for ergodic measures we have the following characterization of the metric entropy presented by the authors of \cite{BK}:
\begin{thm}
Let $\mu$ be an ergodic measure. Then the following is true $\mu$-a.e. 
$$
h_{\mu}(T) = \lim_{\eps\rightarrow 0} \limsup_{n\rightarrow\infty}-\frac{1}{n}\log \mu(B_n(x,\eps)) = \lim_{\eps\rightarrow 0}\liminf_{n\rightarrow\infty}-\frac{1}{n}\log \mu(B_n(x,\eps)).
$$
\end{thm}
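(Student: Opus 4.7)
My plan is to prove separately an upper bound $\limsup_{n} -\frac{1}{n}\log\mu(B_n(x,\eps)) \leq h_\mu(T)$ and a matching lower bound (up to $o_\eps(1)$) $\mu$-a.e., then let $\eps\to 0$ and invoke ergodicity. As a reduction, set
$$\overline{h}(x,\eps) := \limsup_{n\to\infty} -\tfrac{1}{n}\log\mu(B_n(x,\eps)), \qquad \underline{h}(x,\eps) := \liminf_{n\to\infty} -\tfrac{1}{n}\log\mu(B_n(x,\eps)),$$
both Borel in $x$ and monotone non-decreasing as $\eps\downarrow 0$. A short argument using $B_n(x,\eps)\subseteq T^{-1}B_{n-1}(Tx,\eps)$ plus $T$-invariance of $\mu$ shows that $\overline{h}$ and $\underline{h}$ are $T$-invariant functions of $x$, hence $\mu$-a.e. constant by ergodicity. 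So it suffices to prove each bound on a set of positive measure.

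For the upper bound, fix $\eps>0$ and pick a finite Borel partition $\alpha$ with $\diam(\alpha)<\eps$. The dynamical atom $\alpha_n(x):=\bigcap_{i=0}^{n-1}T^{-i}\alpha(T^ix)$ is contained in $B_n(x,\eps)$, because atoms of $\alpha$ have diameter less than $\eps$, so any $y$ sharing the $\alpha$-itinerary of $x$ up to time $n-1$ satisfies $d(T^iy,T^ix)<\eps$. Thus $\mu(B_n(x,\eps))\geq\mu(\alpha_n(x))$, and the Shannon--McMillan--Breiman theorem gives $-\tfrac{1}{n}\log\mu(\alpha_n(x))\to h_\mu(T,\alpha)\leq h_\mu(T)$ $\mu$-a.e. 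Taking limsup and then $\eps\to 0$ yields $\overline{h}(x)\leq h_\mu(T)$.

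The lower bound is the delicate step, since there is no generating partition to use directly. Given $\delta>0$, I would select a partition $\beta$ satisfying $h_\mu(T,\beta)>h_\mu(T)-\delta$ whose boundary is dynamically thin, meaning the $\rho$-neighbourhood $\partial_\rho\beta$ has $\mu$-measure arbitrarily small for small $\rho$; this is arranged by slightly perturbing each atom's boundary within $\mu$-null shells. Applying Birkhoff's theorem to $\mathbf{1}_{\partial_\rho\beta}$ produces a set of $x$ of measure at least $1-\delta$ on which the orbit $\{T^ix\}_{i<n}$ visits $\partial_\rho\beta$ for at most $2\rho n$ indices once $n$ is large. For such $x$ and for $\eps<\rho$, the Bowen ball $B_n(x,\eps)$ meets at most $(\#\beta)^{2\rho n}$ atoms of $\bigvee_{i<n}T^{-i}\beta$, since in the ``good'' times the inequality $\eps<\rho$ forces every point of $B_n(x,\eps)$ into the same $\beta$-atom as $T^ix$. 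Coupling this with the Shannon--McMillan--Breiman bound $\mu(\beta_n(y))\leq e^{-n(h_\mu(T,\beta)-\delta)}$ valid on a large set gives
$$\mu(B_n(x,\eps)) \leq (\#\beta)^{2\rho n}\,e^{-n(h_\mu(T,\beta)-\delta)},$$
hence $\underline{h}(x,\eps)\geq h_\mu(T,\beta)-\delta-2\rho\log\#\beta$ on a set of almost full measure. Letting first $\rho\to 0$ and then $\delta\to 0$ yields $\underline{h}(x)\geq h_\mu(T)$.

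The decisive technical step is this atom-counting estimate combined with the construction of a partition whose boundary shell has small dynamical measure; everything else is an assembly of Shannon--McMillan--Breiman, Birkhoff's theorem, and the ergodicity reduction. The usual pitfall is losing measurability or uniformity of the ``good'' set across different $n$, so one has to control the relevant convergence sets jointly using a Borel--Cantelli type extraction before passing to the monotone limit in $\eps$.
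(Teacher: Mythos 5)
The paper does not actually prove this statement: it is the Brin--Katok local entropy formula, quoted as a known result from \cite{BK}, so there is no in-paper argument to compare yours against. Your strategy is the standard route to it, and both the reduction (monotonicity in $\eps$, a.e.\ constancy via the sub-invariance coming from $B_n(x,\eps)\subseteq T^{-1}B_{n-1}(Tx,\eps)$ and ergodicity) and the upper bound (a small-diameter partition whose dynamical atom sits inside the Bowen ball, followed by Shannon--McMillan--Breiman) are correct.

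The lower bound, however, has a genuine gap at precisely the step you call decisive. The counting estimate --- $B_n(x,\eps)$ meets at most $(\#\beta)^{2\rho n}$ atoms of $\bigvee_{i<n}T^{-i}\beta$ (more precisely $(\#\beta)^{2\mu(\partial_\rho\beta)n}$) --- is fine, but the displayed inequality
$$
\mu(B_n(x,\eps))\leq(\#\beta)^{2\rho n}\,e^{-n(h_\mu(T,\beta)-\delta)}
$$
does not follow from it. The SMB bound $\mu(\beta_n(y))\leq e^{-n(h_\mu(T,\beta)-\delta)}$ holds only for $y$ in a set $G_n$ with $\mu(G_n)\to1$; the atoms intersecting $B_n(x,\eps)$ need not contain any point of $G_n$, and such ``fat'' atoms can have measure far larger than $e^{-n(h_\mu(T,\beta)-\delta)}$ (their itineraries differ from that of the typical point $x$ only at the $\leq 2\rho n$ bad times, but for a general ergodic measure this can change an atom's measure uncontrollably). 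Splitting $\mu(B_n(x,\eps))\leq\mu(B_n(x,\eps)\cap G_n)+\mu(X\setminus G_n)$ does give the desired bound on the first term, since $G_n$ is a union of atoms; but the second term is only $o(1)$, not exponentially small, so it ruins the estimate on $-\frac{1}{n}\log\mu(B_n(x,\eps))$. This is the real content of the Brin--Katok lemma: one does not prove a pointwise bound on $\mu(B_n(x,\eps))$, but instead bounds the measure of the set of $x$ whose Bowen ball is too fat, via a disjointness argument with an $(n,2\eps)$-separated subset of that set together with the bound on $\mu(X\setminus G_n)$, and then applies Borel--Cantelli. Your closing remark about a ``Borel--Cantelli type extraction'' points in the right direction, but you attribute the difficulty to measurability and uniformity of the good sets over $n$ (which Egorov handles routinely) rather than to the exceptional fat atoms; as written, the lower bound is not established.
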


\subsection{Topological entropy}
Below we will  define two types of topological entropy, namely the upper capacity topological entropy and Bowen topological entropy. The reader should keep in mind that values of these entropies are equal for invariant and compact spaces.

Let $E\subseteq X$. A set $S\subseteq X$ is \textit{$(n,\eps)$-separated} for $E$ if $S\subseteq E$ and  $d_{n}(x,y)>\eps$ for any $x,y \in S$, $x\neq y$. A set $S\subseteq X$ is \textit{$(n,\eps)$-spanning} for $E$ if $S\subseteq E$ and for any $x \in X$ there exists $y \in S$ such that $d_{n}(x,y)<\eps$.
Define:
\begin{eqnarray*}
s_n(E,\eps) &=& \sup\{|S|: S \text{ is }(n,\eps)\text{-separated for } E\},\\
r_n(E,\eps) &=& \inf\{|S|:S \text{ is }(n,\eps)\text{-spanning for }E\}.
\end{eqnarray*}
It is true that:
\begin{equation}\label{SepSpanSet}
	r_n(E,\eps)\leq s_n(E,\eps)\leq r_n(E,\frac{\eps}{2}).
\end{equation}
\begin{defn}
The \textit{upper capacity topological entropy} of $E\subset X$ is defined by the following formula:
	$$
	h_d(T,E) = \lim_{\eps\rightarrow 0}\limsup_{n\rightarrow\infty}\frac{\log s_n(E,\eps)}{n}=\lim_{\eps\rightarrow0}\limsup_{n\rightarrow\infty}\frac{\log r_n(E,\eps)}{n}.
	$$
\end{defn}
Now let $\mathcal{G}_n(E,\eps)$ be the collection of all finite or countable coverings of the set $E$ with the Bowen balls $B_v(x,\eps)$ for $v\geq n$. Define:
$$
C(E;t,n,\eps,T) = \inf_{C \in \mathcal{G}_n(E,\eps)}\sum_{B_v(x,\eps) \in C}e^{-tv}
$$
and
$$
C(E;t,\eps,T) = \lim_{n\rightarrow\infty}C(E;t,n,\eps,T).
$$
Set:
$$
\htop(E,\eps,T) = \inf\{t: C(E;t,\eps,T)=0 \} = \sup\{t: C(E;t,\eps,T)=\infty \}.
$$
\begin{defn}
The \textit{Bowen topological entropy} of the set $E\subset X$ is defined by the following formula:
$$
\htop(T,E) = \lim_{\eps\rightarrow 0}\htop(E,\eps,T).
$$	
\end{defn}

By the definitions of the measure and the topological entropy we have that $h_{\mu}(T)\leq \htop(T,X)$. The following theorem known as a \emph{variational principle}  states a stronger relation between the metric and topological entropy:
\begin{thm}
Let $T:X\rightarrow X$ be a continuous map of a compact metric space $X$. Then:
$$
\htop(T,X) = \sup_{\mu}h_{\mu}(T),
$$
where the supremum is taken over all $T$-invariant Borel probability measures $\mu$.
\end{thm}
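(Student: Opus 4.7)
The plan is to prove the two inequalities $h_\mu(T)\leq \htop(T,X)$ for every $\mu\in\M_T(X)$ and $\htop(T,X)\leq \sup_\mu h_\mu(T)$ separately, since it is well known that the equality of upper capacity and Bowen entropy for compact invariant sets allows us to work throughout with separated/spanning sets.

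For the easier direction, fix $\mu\in\M_T(X)$ and a finite partition $\alpha=\{A_1,\dots,A_k\}$ of $X$. By regularity of $\mu$, I would approximate each $A_i$ from inside by a compact set $C_i\subseteq A_i$ so that, writing $C_0=X\setminus\bigcup C_i$, the refined partition $\beta=\{C_0,C_1,\dots,C_k\}$ satisfies $H_\mu(\alpha\mid\beta)$ arbitrarily small and the $C_i$ ($i\geq 1$) are pairwise separated by some distance $4\eps$. Then for any $n$, each atom of $\bigvee_{i=0}^{n-1}T^{-i}\beta$ has diameter with respect to $d_n$ at most some controlled quantity, and the number of nonempty atoms is bounded by the cardinality of an $(n,\eps)$-spanning set of $X$ times a combinatorial factor depending only on $k$ (the factor accounts for the freedom in the $C_0$-coordinates). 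Taking $\frac{1}{n}\log$, letting $n\to\infty$ and then $\eps\to 0$ yields $h_\mu(T,\alpha)\leq \htop(T,X)$, and supremum over $\alpha$ finishes this direction.

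For the hard direction, the plan is the classical Misiurewicz construction. Fix $\eps>0$ and let $E_n\subseteq X$ be an $(n,\eps)$-separated set with $|E_n|=s_n(X,\eps)$. Define
\[
\sigma_n=\frac{1}{|E_n|}\sum_{x\in E_n}\delta_x,\qquad \mu_n=\frac{1}{n}\sum_{i=0}^{n-1}T^i_*\sigma_n,
\]
and pass to a weak$^*$ subsequential limit $\mu$ of $\{\mu_n\}$; the usual telescoping argument shows $\mu\in\M_T(X)$. Next, choose a finite partition $\alpha$ of $X$ whose atoms have diameter less than $\eps$ and whose boundaries satisfy $\mu(\partial A)=0$ for every $A\in\alpha$ (such $\alpha$ exists by a standard argument partitioning with small balls and shifting radii so that the boundaries have zero $\mu$-mass). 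Because every atom of $\bigvee_{i=0}^{n-1}T^{-i}\alpha$ contains at most one point of $E_n$, we get $H_{\sigma_n}\!\left(\bigvee_{i=0}^{n-1}T^{-i}\alpha\right)=\log|E_n|$. The key step is then the concavity/subadditivity manipulation of Misiurewicz: fixing $q<n$, split $\{0,\dots,n-1\}$ into blocks of length $q$ with shifts $j=0,\dots,q-1$, apply subadditivity of $H_{\sigma_n}$ and invariance of entropy under push-forward to obtain
\[
\log|E_n|\leq \frac{n}{q}H_{\mu_n}\!\left(\bigvee_{i=0}^{q-1}T^{-i}\alpha\right)+2q\log k,
\]
where $k=|\alpha|$.

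Dividing by $n$, letting $n\to\infty$ along the chosen subsequence, and using the crucial continuity property that $\mu_n(A)\to \mu(A)$ for atoms of $\bigvee_{i=0}^{q-1}T^{-i}\alpha$ (which follows from $\mu(\partial A)=0$), we deduce
\[
\limsup_{n\to\infty}\frac{1}{n}\log s_n(X,\eps)\leq \frac{1}{q}H_\mu\!\left(\bigvee_{i=0}^{q-1}T^{-i}\alpha\right).
\]
Letting $q\to\infty$ gives $h_\mu(T,\alpha)$, hence the bound by $h_\mu(T)\leq \sup_{\nu\in\M_T(X)} h_\nu(T)$, and finally $\eps\to 0$ yields $\htop(T,X)\leq \sup_\mu h_\mu(T)$. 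The main obstacle, and the step requiring the most care, is choosing the partition $\alpha$ with $\mu(\partial A)=0$ for all $A$ (since $\mu$ itself is only obtained after the separated sets are fixed, one must in fact select $\alpha$ after extracting the weak$^*$ limit) and executing the Misiurewicz combinatorial inequality cleanly; the rest is bookkeeping.
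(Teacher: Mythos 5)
The paper states this result (the classical variational principle, in Misiurewicz's formulation) without proof, as standard background, so there is no internal argument to compare against. Your proposal is the textbook Goodwyn--Misiurewicz proof, and its overall architecture is right. The hard direction in particular is laid out accurately: the identity $H_{\sigma_n}(\bigvee_{i=0}^{n-1}T^{-i}\alpha)=\log|E_n|$ does follow from $\diam(\alpha)<\eps$ together with $(n,\eps)$-separation, the block-decomposition inequality is the correct one, and you rightly flag that $\alpha$ must be chosen with $\mu$-null boundaries \emph{after} the weak$^*$ limit is extracted (the boundaries of atoms of $\bigvee_{i=0}^{q-1}T^{-i}\alpha$ are then handled via $T$-invariance of $\mu$).

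The one concrete flaw is in the easy direction. The number of nonempty atoms of $\bigvee_{i=0}^{n-1}T^{-i}\beta$ is \emph{not} bounded by $r_n(X,\eps)$ times a factor depending only on $k$. The correct count is that each $(n,\eps)$-Bowen ball (with $\eps$ small compared to the mutual distances of the compact sets $C_i$, $i\geq 1$) meets at most $2^n$ atoms of $\bigvee_{i=0}^{n-1}T^{-i}\beta$: at each of the $n$ coordinates the orbit point can lie in at most one of $C_1,\dots,C_k$ or else in $C_0$, hence two choices per coordinate. So the bound is $r_n(X,\eps)\cdot 2^n$, and after taking $\frac{1}{n}\log$ you only get $h_\mu(T,\beta)\leq \htop(T,X)+\log 2$. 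The surplus $\log 2$ must then be removed by the standard device of applying the whole inequality to $T^m$, using $h_\mu(T^m)=m\,h_\mu(T)$ and $\htop(T^m,X)=m\,\htop(T,X)$, and letting $m\to\infty$. As written, your claim would make the easy direction immediate with no such trick, which is too strong; with this repair the argument is complete and standard.
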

For ergodic measures we also have the \emph{Katok entropy formula} proved in \cite{K}, which is a generalization of Bowen's formula:
\begin{equation}\label{KatokEntrForm}
h_{\mu}(T) = \lim_{\eps\rightarrow 0}\lim_{n\rightarrow\infty}\frac{1}{n}N_T(n,\eps,\delta),
\end{equation}
where $N_T(n,\eps,\delta)$ denotes the smallest number of $(n,\eps)$-Bowen balls covering a~subset in $X$ of $\mu$-measure at least $1-\delta$ for some ergodic measure $\mu$.

\section{Main results}

\subsection{$\Phi$-irregular points and shadowing}
\begin{thm}\label{thmhtop}
Let $(X,T)$ be a dynamical system with the shadowing property and let $Y\subseteq X$ be a chain recurrent class.
If $\Phi\in \mathcal{C}(X,\mathbb{R})$ is such that there exist $\mu_1,\mu_2 \in \mathcal{M}_e(Y)$ with:
$$
\int \Phi d\mu_1 \neq \int \Phi d\mu_2
$$
then $\htop(T,I_\Phi(T))\geq \htop(T,Y)$.
\end{thm}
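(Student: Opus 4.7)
The strategy is to build a Moran-like Cantor subset $F\subseteq I_\Phi(T)$ whose Bowen entropy can be driven arbitrarily close to $s:=\htop(T,Y)$, by interleaving \emph{entropy blocks} from a near-maximal-entropy ergodic measure on $Y$ with \emph{correcting blocks} built from the generic point of $\mu_1$ or $\mu_2$, glued inside $Y$ via chain transitivity of the class and realized in $X$ via shadowing.

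Fix $\eta>0$. The variational principle applied to the compact invariant set $Y$ yields an ergodic $\nu\in\mathcal{M}_e(Y)$ with $h_\nu(T)>s-\eta$; let $a=\int\Phi\,d\nu$. Since $\int\Phi\,d\mu_1\ne\int\Phi\,d\mu_2$, at least one $\mu_i$ satisfies $\int\Phi\,d\mu_i\ne a$; rename it $\mu_*$, set $b=\int\Phi\,d\mu_*$, and pick a $\mu_*$-generic point $x_*\in\supp\mu_*\subseteq Y$. Choose $\varepsilon>0$ small and let $\delta>0$ be the shadowing constant for $\varepsilon$; chain transitivity of the class $Y$ supplies a uniform bound $G=G(\delta)$ on the length of $\delta$-chains joining any two points of $Y$. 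Katok's formula \eqref{KatokEntrForm} combined with the Birkhoff ergodic theorem applied to $\nu$ produces, for each sufficiently large $n$, an $(n,4\varepsilon)$-separated set $E_n\subseteq Y$ of cardinality at least $e^{n(h_\nu-2\eta)}$ whose elements have length-$n$ Birkhoff $\Phi$-average within $\eta$ of $a$.

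Fix parameters $r>1$ and $c>0$ and set $N_k=\lceil r^k\rceil$, $M_k=\lfloor cN_k\rfloor$. For each sequence $\omega=(\omega_k)_{k\ge 1}$ with $\omega_k\in E_{N_k}$, concatenate, for $k=1,2,\dots$, the length-$N_k$ orbit of $\omega_k$, a $\delta$-chain of length $\le G$ to $x_*$, the length-$M_k$ orbit of $x_*$, and a $\delta$-chain of length $\le G$ ending at $\omega_{k+1}$; this yields a $\delta$-pseudo-orbit, and shadowing gives a tracing point $y_\omega\in X$. Let $L_k$ denote the total length through generation $k$ and $L_k^*:=L_k-M_k$. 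A direct geometric-sum calculation, using that each $\omega_k$-block contributes $N_k a$ up to error $\eta N_k$ and each $x_*$-block contributes $M_k b+o(M_k)$, gives
$$\frac{1}{L_k}\sum_{i=0}^{L_k-1}\Phi(T^i y_\omega)\longrightarrow\frac{a+cb}{1+c},\qquad\frac{1}{L_k^*}\sum_{i=0}^{L_k^*-1}\Phi(T^i y_\omega)\longrightarrow\frac{ar+cb}{r+c},$$
whose difference is $c(b-a)(r-1)/((1+c)(r+c))\ne 0$; hence the Birkhoff averages of $y_\omega$ do not converge and $F:=\{y_\omega\}\subseteq I_\Phi(T)$.

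For the entropy lower bound, the $(N_k,4\varepsilon)$-separation of $E_{N_k}$ propagates through $\varepsilon$-shadowing: distinct $\omega,\omega'$ first disagreeing at level $k$ yield $(L_k,2\varepsilon)$-separated shadowing points. Endowing $F$ with the Bernoulli measure $\mu$ assigning mass $|E_{N_j}|^{-1}$ at level $j$, one obtains $\mu(B_v(y_\omega,\varepsilon))\le\prod_{j\le k}|E_{N_j}|^{-1}\le e^{-(h_\nu-2\eta)\sum_{j\le k}N_j}$ for every $v\in[L_k,L_{k+1}]$. Since $L_{k+1}/L_k\to r$ and $\sum_{j\le k}N_j/L_k\to 1/(1+c)$, this gives $\liminf_{v\to\infty}-\tfrac{1}{v}\log\mu(B_v(y_\omega,\varepsilon))\ge(h_\nu-2\eta)/((1+c)r)$ uniformly in $\omega$, and the entropy distribution principle yields $\htop(T,I_\Phi(T))\ge(h_\nu-2\eta)/((1+c)r)$. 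The principal obstacle is precisely the tension visible here: irregularity of $y_\omega$ requires $c>0,\,r>1$ (the oscillation amplitude is $\Theta(c(r-1))$), while the entropy bound improves as $c\to 0^+,\,r\to 1^+$. The resolution is that $c,r,\eta$ may be shrunk independently, each admissible triple furnishing a separate witness $F(c,r,\eta)\subseteq I_\Phi(T)$; taking suprema along $c\to 0^+,r\to 1^+,\eta\to 0^+$ delivers $\htop(T,I_\Phi(T))\ge h_\nu$, and the variational slack in $\nu$ closes the remaining gap to give $\htop(T,I_\Phi(T))\ge s=\htop(T,Y)$.
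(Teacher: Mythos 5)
Your overall architecture --- entropy blocks drawn from a near-maximal-entropy ergodic measure on $Y$, correcting blocks from a generic point of $\mu_*$, gluing by $\delta$-chains inside the chain recurrent class, shadowing, and a Bernoulli-measure/entropy-distribution-principle count with geometrically growing block lengths --- is essentially the paper's strategy, and much of it is carried out correctly: the two limiting averages $\tfrac{a+cb}{1+c}$ and $\tfrac{ar+cb}{r+c}$ and their difference $c(r-1)(b-a)/((1+c)(r+c))$ are right, the loss factor $1/((1+c)r)$ in the entropy bound is computed correctly, and letting $c\to 0^+$, $r\to 1^+$, $\eta\to 0^+$ over separate witness sets is legitimate. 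Your device of introducing a third measure $\nu$ of near-maximal entropy and oscillating against whichever $\mu_i$ has $\int\Phi\,d\mu_i\neq\int\Phi\,d\nu$ is a clean way to combine the entropy and irregularity requirements.

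The genuine gap is in the separation step. The connecting $\delta$-chains have lengths that depend on their endpoints, hence on the symbols: the chain from the end of the generation-$(k-1)$ correcting block to $\omega_k$ generally has a different length than the chain to $\omega_k'$, and likewise for the chain out of the $k$-th entropy block. Consequently, for $\omega,\omega'$ first differing at level $k$, the $k$-th entropy blocks of the two pseudo-orbits are not time-aligned: $T^j\omega_k$ is traced by $y_\omega$ at time $t+j$ while $T^j\omega_k'$ is traced by $y_{\omega'}$ at time $t'+j$ with possibly $t\neq t'$. The $(N_k,4\varepsilon)$-separation of $E_{N_k}$ compares the two orbits at the \emph{same} index $j$, so it yields no lower bound on $d(T^{t+j}y_\omega,\,T^{t+j}y_{\omega'})$ when $t\neq t'$ (there is no expansivity to convert a shifted separation into an unshifted one). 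Hence the claimed $(L_k,2\varepsilon)$-separation, and with it the key estimate $\mu(B_v(y_\omega,\varepsilon))\le\prod_{j\le k}|E_{N_j}|^{-1}$, does not follow as written. This is precisely the difficulty the paper's pigeonhole step is designed to remove: one fixes a finite cover $\mathcal U$ of $Y$ with mesh smaller than $\delta$ and passes to a subfamily $E'_m\subseteq E_m$, still of nearly full exponential size, whose points all start in one fixed element $U$ and whose time-$m$ images all land in one fixed element $V$; then a \emph{single} connecting chain of a \emph{single} fixed length serves every symbol choice, the block positions are identical for all $\omega$, and the separation propagates. Your proof needs this (or an equivalent device forcing all gaps to have a common, symbol-independent length); once inserted, the remainder of your argument goes through.
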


\begin{proof}
It suffices to show that for any $\gamma>0$ we have:
$$\htop(T,I_\Phi(T))\geq \htop(T,Y)-6\gamma.$$
We will achieve this goal by constructing a closed set $A\subset I_\Phi(T)$ such that:
$$\htop(T,A)\geq \htop(T,Y)-6\gamma.$$
So let $\gamma >0$ be fixed. Without loss of generality we can assume by the variational principle that:
\begin{equation}\label{VarPrinc}
h_{\mu_1}(T)>\htop(T,Y)-\gamma.
\end{equation}
Denote:
\begin{align}
\int\Phi d\mu_1&=\alpha, \label{alpha} \\
\int\Phi d\mu_2&=\beta. \label{beta}
\end{align}
Without loss of generality we may assume that $\alpha<\beta$. For sufficiently large $\xi_0~\in~(0,1)$ we have:
\begin{equation}\label{1-xi}
1-\xi_0 < \frac{\gamma}{\htop(T,Y)}.
\end{equation}
Take $\rho \in (1/2,1)$ such that:
\begin{equation}\label{rho}
1-\xi_0\rho < \frac{\gamma}{\htop(T,Y)}.
\end{equation}
Choose $\eta>0$ such that $8\eta< (1-\xi_0)(\beta-\alpha)$. The fuction $\Phi$ is continuous, so it is also bounded on $X$. In particular there is $M>0$ such that for all $x \in X$ we have $|\Phi(T^ix)|+\beta<M$.

Since $\mu_1$-generic points have full $\mu_1$ measure, for sufficiently large $\tilde{L}_1>0$ there is a subset $D_1\subseteq Y$ with $\mu_1(D_1)>\frac{3}{4}$ such that for every $x \in D_1$ and for every $n>\tilde{L}_1$ we have:
\begin{equation}\label{LCond}
|\frac{1}{n}\sum_{i=0}^{n-1}\Phi(T^ix)-\alpha|<\frac{\eta}{4}.
\end{equation}
By the Katok entropy formula (\ref{KatokEntrForm}) for each sufficiently small $\eps>0$ there is $N>0$ such that for all $n>N$ we have:
$$
\frac{1}{n}\log N_T(n,4\eps,\frac{1}{2})>h_{\mu_1}(T)-\gamma,
$$
where $N_T(n,4\eps,\frac{1}{2})$ is defined like in (\ref{KatokEntrForm}). If we denote by $E_m$ the maximal $(m,4\eps)$-separated set in $D_1$ we have for $m > N$:
\begin{equation}\label{E'set}
|E_m|> N_T(m,4\eps,\frac{1}{2})> e^{m(\htop(T,Y)-\gamma)},
\end{equation}
since $\{B_m(x,4\eps)\}_{x \in E_m}$ is a cover of $D_1$. Moreover, by decreasing $\eps$ if necessary, we can assume that $d(x,y)<\eps$ implies $|\Phi(x)-\Phi(y)|<\eta$. Fix $\delta>0$ such that any $2\delta$-pseudo-orbit is $\eps$-traced. Let $\mathcal{U} = \{U_i\}_{i=1}^S$ be a finite open cover of $Y$ such that $\mesh (\mathcal{U}) < \delta $. 
For $i,j \in \{1,\dots,S\}$ by $\omega_{ij}$ we denote the length of a chosen $\delta$-chain between $U_i$ and $U_j$. Let $\omega_{max} = \max_{i,j \in \{1,\dots,S\}}\omega_{ij}$.
For every $m\geq N$ and $i,j~\in~\{1,\dots,S\}$ we define a family of sets:
$$
\Lambda^{(m)}_{ij}=\{x \in E_m\cap U_i:T^mx \in U_j\}.
$$
For each $m>N$ by $i_m,j_m$ we denote the index of the set with the highest cardinality, that is:
$$
|\Lambda^{(m)}_{i_mj_m}| = \max\left\{|\Lambda^{(m)}_{ij}|:i,j\in \{1,\dots,S\}\right\}.
$$
Note that there are exactly $S$ sets in the cover which gives us at most $S^2$ possible pairs $(i_m,j_m)\in \{1,\dots,S\}\times \{1,\dots,S\}$. Hence for the sequence $\{\Lambda^{(m)}_{i_mj_m}\}_{m=N+1}^{\infty}$ we can find an increasing subsequence $\{m_k\}_{k=0}^{\infty}$ and an indexing pair $(\bar{i},\bar{j})$ which occurs infinitely many times, i.e.  $(i_{m_k},j_{m_k}) = (\bar{i},\bar{j})$ for all $k\geq 0$.

Denote $U=U_{\bar{i}}$, $V = U_{\bar{j}}$ and $E'_{m_k} = \Lambda^{(m_k)}_{\bar{i}\bar{j}}$. That way we get a new family $\{E'_{m_k}\}_{k=0}^{\infty}$ of $(m_k,4\eps)$-separated sets, with:
$$
E'_{m_k} = \{ x \in E_{m_k}\cap U: T^{m_k}x \in V \}
$$
and clearly:
$$
S^2|E'_{m_k}|\geq |E_{m_k}| > e^{m_k(\htop(T,Y)-\gamma)}.
$$
Consequently for sufficiently large $k$ we have:
\begin{equation}\label{2gamma}
|E'_{m_k}|> e^{m_k(\htop(T,Y)-2\gamma)}.
\end{equation}
Choose $k_1 \in \mathbb{N}$ large enough so that:
\begin{equation}\label{LBound}
L=m_{k_1} > \frac{\omega_{max}M}{\eta}
\end{equation}
and each $m_k$ for $k>k_1$ satisfies both (\ref{LCond}) and (\ref{2gamma}).

Analogously, since $\mu_2$-generic points have full $\mu_2$ measure, for sufficiently large $\tilde{L}_2>0$ there is a subset $D_2\subseteq Y$ with $\mu_2(D_2)>3/4$ such that for every $x \in D_2$ and for every $n>\tilde{L}_2$ we have:
\begin{equation}\label{L'Cond}
|\frac{1}{n}\sum_{i=0}^{n-1}\Phi(T^ix)-\beta|<\frac{\eta}{4}.
\end{equation}
Take:
$$
\tilde{L} = \max\{\tilde{L}_1,\tilde{L}_2\}.
$$
Choose some sufficiently large $k_2 \in \mathbb{N}$ such that:
$$
m_{k_2} > \max \{ \frac{2\tilde{L}}{\xi_0}, \frac{\tilde{L}}{1-\xi_0} \}
$$
and there is $J>m_{k_2}$ such that:
$$
\frac{\xi_0\rho}{2} < \frac{m_{k_2}}{J} < \xi_0
$$
and
\begin{equation}\label{KBound}
J>\frac{M\omega_{max}}{\eta}.
\end{equation}
It follows that:
\begin{align}
8\eta&< (1-\frac{m_{k_2}}{J})(\beta-\alpha),\\
(1-\frac{m_{k_2}}{J})&<\frac{\gamma}{\htop(T,Y)}.\label{gammaentr}
\end{align}
Define $\xi = \frac{m_{k_2}}{J}$. That way $\xi J$ satisfies (\ref{LCond}) and (\ref{2gamma}), $(1-\xi) J$ satisfies (\ref{L'Cond}) and both $\xi J$ and $(1-\xi)J$ are integers. Let:
 $$
 \zeta = \xi\alpha + (1-\xi)\beta.
 $$
Fix an arbitrary $\mu_2$-generic point $y \in D_2$. Let $U'$ be the set from $\mathcal{U}$ such that $y \in U'$ and fix any $V' \in \mathcal{U}$ such that $T^{(1-\xi)J}(y) \in V'$. By the definitions of $\omega_{max}$ there are $\delta$-chains $\{p_i\}_{i=0}^{P}$, $\{q_i\}_{i=0}^{Q}$, $\{w_i\}_{i=0}^{W}$ of length at most $\omega_{max}$ such that:
\begin{itemize}
	\item[(p)] $\{p_i\}_{i=0}^{P}$ is such that $p_0 \in V'$ and $p_P \in U$,
	\item[(q)] $\{q_i\}_{i=0}^{Q}$ is such that $q_0 \in V$ with $q_Q \in U$,
	\item[(w)] $\{w_i\}_{i=0}^{W}$ is such that $w_0 \in V$ with $w_W \in U'$.
\end{itemize}
In case when some of relevant sets are equal we simply assume that the corresponding $\delta$-chain is of length zero. If we put $K = J+W$, then $K$ satisfies (\ref{KBound}) as well. Take $\Gamma_K$ as the set of all $\delta$-pseudo-orbits of length $K$ built from the following blocks:
\begin{enumerate}
	\item block of length $\xi J$ of the orbit of some point from $E'_{\xi J}$ (by definition that point is $\mu_1$-generic),
	\item $\delta$-chain $\{w_i\}_{i=0}^{W-1}$ from $V$ to $U'$ (note that we skipped the last point of the chain),
	\item block of length $(1-\xi)J$ of the orbit of the chosen point $y \in U'$ (by definition $y$ is $\mu_2$-generic).
\end{enumerate}
Note that for any element $y \in \Gamma_K$ we have the following estimation:
\begin{multline}\label{zetasplit}
\left|\frac{1}{K}\sum_{i=0}^{K-1}\Phi(T^iy)-\zeta\right|   \leq \left|\frac{1}{K}\sum_{i=0}^{\xi J -1}\Phi(T^iy)-\frac{\xi J}{K}\alpha\right|\\ +
\left|\frac{1}{K}\sum_{i=\xi J}^{\xi J + W-1}\Phi(T^iy)-\frac{K-J}{K}\zeta\right| \\
+ \left|\frac{1}{K}\sum_{i=\xi J + W}^{K-1}\Phi(T^iy)-\frac{(1-\xi)J}{K}\beta\right|.
\end{multline}
Now let us present the estimates of the three summands above. For the first one we get:
\begin{equation}\label{zeta1}
\left|\frac{1}{K}\sum_{i=0}^{\xi J-1}\Phi(T^iy)-\frac{\xi J}{K}\alpha\right|  = \frac{\xi J}{K}\left|\frac{1}{\xi J}\sum_{i=0}^{\xi J-1}\Phi(T^iy)-\alpha\right|<\frac{\xi J }{K}\frac{\eta}{4}<\frac{\eta}{4},
\end{equation}
and, for the second one, by using (\ref{KBound}) and the definition of $K$, we get:
\begin{align}\label{zeta2}
\left|\frac{1}{K}\sum_{i=\xi J}^{\xi J +W-1}\Phi(T^iy) \right.&\left.- \frac{K-J}{K}\zeta\right|  < \frac{1}{K}\sum_{i=0}^{W-1}\left|\Phi(T^{i+\xi J}y)-\frac{K-J}{W}\zeta\right|\\
&  = \frac{1}{K}\sum_{i=0}^{W-1}\left|\Phi(T^{i+\xi J}y)-\zeta\right|  <\frac{WM}{K}< \eta. \nonumber
\end{align}
And, finally, for the third one we have:
\begin{align}\label{zeta3}
\left|\frac{1}{K}\sum_{i=\xi J +W}^{K-1}\Phi(T^iy)\right. &\left.-\frac{(1-\xi)J}{K}\beta\right|  = \frac{(1-\xi)J}{K}\left|\frac{1}{(1-\xi)J}\sum_{i=\xi J + W}^{K-1}\Phi(T^iy)-\beta\right|\\
& = \frac{(1-\xi) J}{K}\left|\frac{1}{(1-\xi)J}\sum_{i=0}^{(1-\xi)J-1}\Phi(T^{i+\xi J + W}y)-\beta\right| \nonumber \\
&<\frac{(1-\xi)J}{K}\frac{\eta}{4}<\frac{\eta}{4}. \nonumber
\end{align}
Going back to (\ref{zetasplit}), taking (\ref{zeta1}), (\ref{zeta2}), (\ref{zeta3}) into consideration we have:
\begin{equation}\label{KzetaEq}
\left|\frac{1}{K}\sum_{i=0}^{K-1}\Phi(T^iy)-\zeta\right| < 2\eta.
\end{equation}
By $(\ref{gammaentr})$ we have:
\begin{equation}\label{GammaK}
|\Gamma_K|\geq |E'_{\xi J}| \geq e^{\xi J(\htop(T,Y)-2\gamma)}\geq e^{J(\htop(T,Y)-3\gamma)}.
\end{equation}

{\bf Construction (*)}
 We are going to define a $\delta$-pseudo-orbit $\mathcal Z=\{z_i\}_{i=1}^{\infty}$ in $Y$ combining cyclically two types of blocks:
\begin{enumerate}\label{blockTypes}
\item[(C1)] blocks of length $L+Q$ consisting of the part of the orbit $x_{[0,L-1]}$ of some point $x$ from $E'_L$ and the $\delta$-chain $\{q_i\}_{i=0}^{Q-1}$ returning from $V$ to $U$ (note that $T^{L}x, q_0 \in V$ and $q_Q, x \in U$),
\item[(C2)] blocks of length $K+P$ consisting of the pseudo-orbit from $\Gamma_K$ and $\delta$-chain $\{p_i\}_{i=0}^{P-1}$ returning to $U$ (note that $T^{(1-\xi)J}y, p_0 \in V'$ and $p_P \in U$ and the first point of each pseudo-orbit in $\Gamma_K$ is in $U$).
\end{enumerate}
By the above any concatenation of sequences of $\delta$-pseudo-orbits (C1), (C2) is a~$\delta$-pseudo-orbit as well.

Before we start the construction of the $\delta$-pseudo-orbit we would like to make sure that the total length of the blocks (C1) and (C2) is divisible by the same number in every step of the construction. That is why we choose $\lambda, \kappa >0$ such that:
$$
\lambda(L+Q) = \kappa(K+P).
$$
Without loss of generality we may assume that $\lambda Q \geq \kappa P$ which implies:
\begin{equation}\label{LKrelation}
\lambda L\leq \kappa K.
\end{equation}
We also choose two increasing sequences of integers $\{l_n\}_{n=1}^{\infty}$, $\{l_n'\}_{n=1}^{\infty}$ and two inductively defined sequences $\{a_n\}_{n=1}^\infty$ and $\{b_n\}_{n=1}^\infty$ (see (\ref{EQ:anbn}) below) so that if we denote:
$$M_n = l_n\lambda (L+Q),$$
and
$$M_n' = l_n'\kappa (K+P),$$
then the following conditions are satisfied:
\begin{eqnarray}
M_n &>& \frac{M-\eta}{\eta}a_n, \label{MCond}\\
M_n' &>& \frac{M-\eta}{\eta}b_n\label{M'Cond}.
\end{eqnarray}
The sequences of integers $\{a_n\}_{n=1}^{\infty}$ and $\{ b_n\}_{n=1}^{\infty}$ are defined inductively as follows for $n \geq 1$:
\begin{align}
a_1 &= 0,\nonumber \\
b_n &= a_n+ M_n,  \label{EQ:anbn}\\
a_{n+1} &= b_n+M_n' = a_n + M_n + M_n'. \nonumber
\end{align}
Values $M_n$ (resp. $M'_n$) determine the length of the blocks $\tx^{(n)}$ (resp. $\ty^{(n)}$) which we will use in the construction of the $\delta$-pseudo-orbit below. The first one is the~concatenation of $l_n\lambda$ blocks (C1). 
The second is the concatenation of $l_n'\kappa$ blocks (C2). 
Strictly speaking:
\begin{align*}
\tx^{(n)} &= x^{(1)}_{[0,L-1]}q_0\dots q_{Q-1} x^{(2)}_{[0,L-1]}q_0\dots q_{Q-1}\dots x^{(l_n\lambda)}_{[0,L-1]}q_0\dots q_{Q-1}\\
\ty^{(n)}  &= y^{(1)}_{[0,K-1]}p_0\dots p_{P-1}y^{(2)}_{[0,K-1]} p_0\dots p_{P-1}\dots y^{(l_n'\kappa)}_{[0,K-1]}p_0\dots p_{P-1},
\end{align*}
for some not necessarily pairwise distinct points $x^{(i)} \in E'_L\subseteq U$ for $i=1,\dots,l_n\lambda$ and $y^{(j)} \in \Gamma_K$ for $j =1,\dots, l_n'\kappa$. Note that $\tx^{(n)}$, $\ty^{(n)}$ are in fact a whole family of $\delta$-pseudo-orbits depending on the choice of the blocks $x^{(i)}$ and elements of $\Gamma_K$ respectively.

Observe that the value $b_n$ tells us what is the length of the block of the pseudo-orbit $\mathcal{Z}$ since the beginning till the moment when we used the block $\tx^{(n)}$ in the $n$th step of our construction. The value $a_n$ tells us what is the total length of the $\delta$-pseudo-orbit $\mathcal{Z}$ after $(n-1)$ steps of the construction, which is also the moment when we used the block $\ty^{(n-1)}$ in the $(n-1)$st step of the construction.

Now let us more precisely present the steps of the construction of the infinite $\delta$-pseudo-orbit $\mathcal{Z}$. As described above we first have:
\begin{align*}
z_{[0,M_1-1]} &= \tx^{(1)},\\
z_{[M_1,M_1+M_1'-1]} &= \ty^{(1)}.
\end{align*}
and then in consecutive steps of the construction we repeat this procedure by putting:
\begin{align*}
z_{[a_n,b_n-1]} &= \tx^{(n)},\\
z_{[b_n, b_n+M_n'-1]}  = z_{[b_n,a_{n+1}-1]}&= \ty^{(n)}.
\end{align*}
The full $n$th step of the construction can be seen on the following scheme:
$$\dots
\xymatrix{
*[A]{ }\ar@{|--|}[rr]^{\tx_{[0,M_n]}}_<{a_n}&\hspace{0,2cm}&*[B]{ }
\ar@{~|}[rr]^{\ty_{[0,M'_n]}}_<{b_n}&\hspace{0,5cm}&*[D]{ }
\ar@{ }[]_<{a_{n+1}}
}\dots
$$
This finishes the construction of a single pseudo-orbit $\mathcal Z$ (Construction (*)). Note that different choices for $\tx^{(n)}$ lead to many different elements $\mathcal{Z}$. Later we will calculate how it reflects to the value of entropy. \medskip

Now let set $A\subseteq X$ be the closure of the set of elements of all possible orbits $\eps$-tracing all possible $\delta$-pseudo-orbits $\mathcal{Z}$ achieved by Construction (*). It means that for $u \in A$ we will find a $\delta$-pseudo-orbit $\mathcal{Z} = \{z_i\}_{i=0}^{\infty}$ such that $d(T^i(u),z_i)<\eps$. To show that $A$ is indeed a subset of the $\Phi$-irregular set $I_{\Phi}(T)$ we need to show that the Birkhoff average diverges for every point $u \in A$, that is:
\begin{equation}\label{BirkhDivCond}
\liminf_{n\rightarrow\infty}\frac{1}{n}\sum_{i=0}^{n-1}\Phi(T^iu) \neq \limsup_{n\rightarrow\infty}\frac{1}{n}\sum_{i=0}^{n-1}\Phi(T^iu).
\end{equation}
Note that by the construction of the pseudo-orbit $\mathcal{Z}$ and the choice of the elements in the set $A$, blocks of the trajectory of every point $u \in A$ is within the $\eps$-distance from the trajectories of length $L$ of points $x^{(i)}$ whose orbits build the blocks appropriate $\tx^{(n)}$.

First we estimate the lower limit of the Birkhoff average for a point $u \in A$ using the subsequence $\{b_n \}_{n=1}^{\infty}$. In fact all calculations below are shown for arbitrarily chosen point $u$ being the element of orbit $\eps$-tracing some $\delta$-pseudo-orbit from Construction (*). However if we take any point $v \in A$ from the closure, then $v$ is the limit of some sequence of points from $A$ and so in the sufficiently long prefix of the orbit of $v$ we will find the same structure of blocks as in the elements of the sequence converging to $v$. That means the estimations below are true for such points as well.
By (\ref{MCond}) we have that:
$$
\frac{a_n}{b_n}M < \eta
$$
and by (\ref{LBound}) we have:
$$
\frac{l_n\lambda QM}{b_n} < \frac{l_n\lambda \omega_{max} M }{l_n \lambda L} \eta.
$$
Note that by the construction of the block $\tx^{(n)}$ we have for every $n \geq 1$:
\begin{multline}\label{bnaverage}
\frac{1}{b_n}\sum_{i=a_n}^{b_n-1}\Phi(T^iu) = \frac{1}{b_n}\sum_{j=1}^{l_n\lambda}\sum_{i=a_n+(j-1)(L+Q)}^{a_n+jL+(j-1)Q-1}\Phi(T^iu) \\
+ \frac{1}{b_n}\sum_{j=1}^{l_n\lambda}\sum_{i=a_n+jL+(j-1)Q}^{a_n+j(L+Q)-1}\Phi(T^iu).
\end{multline}
Hence for every sufficiently large $n$:
\begin{align*}
|\frac{1}{b_n}&\sum_{i=1}^{b_n-1}\Phi(T^iu)-\alpha|  \leq \frac{1}{b_n}|\sum_{i=0}^{a_n-1}(\Phi(T^iu)-\alpha)| + |\frac{1}{b_n}\sum_{i=a_n}^{b_n-1}(\Phi(T^iu)-\alpha)|\\
&\leq \frac{a_n}{b_n}M + |\frac{1}{b_n}\sum_{j=1}^{l_n\lambda}\sum_{i=a_n+(j-1)(L+Q)}^{a_n+jL+(j-1)Q-1}(\Phi(T^iu)- \alpha)|\\
&\qquad + |\frac{1}{b_n}\sum_{j=1}^{l_n\lambda}\sum_{i=a_n+jL+(j-1)Q}^{a_n+j(L+Q)-1}(\Phi(T^iu)-\alpha)|\\
&\leq \frac{a_n}{b_n}M + \frac{1}{b_n}\sum_{j=1}^{l_n\lambda}\sum_{i=a_n+(j-1)(L+Q)}^{a_n+jL+(j-1)Q-1}|\Phi(T^iu)-\Phi(T^{i-a_n-(j-1)(L+Q)}x^{(j)})| \\
&\qquad + |\frac{1}{b_n}\sum_{j=1}^{l_n\lambda}\sum_{i=a_n+(j-1)(L+Q)}^{a_n+jL+(j-1)Q-1}(\Phi(T^{i-a_n-(j-1)(L+Q)}x^{(j)})-\alpha)|\\
&\qquad + \frac{1}{b_n}\sum_{j=1}^{l_n\lambda}\sum_{i=a_n+jL+(j-1)Q}^{a_n+j(L+Q)-1}|\Phi(T^iu)-\alpha|\\
&\leq \frac{a_n}{b_n}M + \frac{l_n\lambda L}{b_n}\eta + |\frac{1}{b_n}\sum_{j=1}^{l_n\lambda}\sum_{i=0}^{L-1}(\Phi(T^ix^{(j)})-\alpha)| + \frac{l_n\lambda Q}{b_n}M\\
& \leq 3\eta + |\frac{1}{b_n}\sum_{j=1}^{l_n\lambda}\sum_{i=0}^{L-1}(\Phi(T^ix^{(j)})-\alpha)|.
\end{align*}
Now observe that by the choice of $L$ the last component can be bounded from above as follows:
\begin{align*}
|\frac{1}{b_n}\sum_{j=1}^{l_n\lambda}\sum_{i=0}^{L-1}(\Phi(T^ix^{(j)})-\alpha)| &\leq \frac{L}{b_n}|\sum_{j=1}^{l_n\lambda}(\frac{1}{L}\sum_{i=0}^{L-1}\Phi(T^iu)-\alpha)|\\
&\leq \frac{l_n\lambda L\eta}{4b_n} \leq \frac{\eta}{4}.
\end{align*}
It follows that:
\begin{equation}\label{liminf}
|\frac{1}{b_n} \sum_{i=1}^{b_n-1}\Phi(T^iu)-\alpha| \leq 4\eta,
\end{equation}
which means that
$$
\liminf_{n\rightarrow\infty}\frac{1}{n}\sum_{i=0}^n\Phi(T^iu) \leq \alpha + 4\eta.
$$
To estimate the upper limit of the Birkhoff average of the point $u \in A$ we use the subsequence $\{ a_n \}_{n=1}^{\infty}$.
By (\ref{M'Cond}) we also have that:
$$
\frac{b_n}{a_{n+1}}M < \eta,
$$
and similarly to (\ref{bnaverage}) we will consider the following splitting for every $n\geq 1$:
\begin{align*}
\frac{1}{a_{n+1}}\sum_{i=b_n}^{a_{n+1}-1}\Phi(T^iu) &= \frac{1}{a_{n+1}}\sum_{j=1}^{l'_n\kappa}\sum_{i=b_n+(j-1)(K+P)}^{b_n+jK+(j-1)P-1}\Phi(T^iu)\\
&+ \frac{1}{a_{n+1}}\sum_{j=1}^{l'_n\kappa}\sum_{i=b_n+jK+(j-1)P}^{b_n+j(K+P)-1}\Phi(T^iu).
\end{align*}
Hence for every suficciently large $n$ we have:

\begin{align*}
|\frac{1}{a_{n+1}} (\Phi(T^iu)-\zeta)|&\leq \frac{1}{a_{n+1}}|\sum_{i=0}^{b_n-1}(\Phi(T^iu)-\zeta)| + \frac{1}{a_{n+1}}|\sum_{i=b_n}^{a_{n+1}-1}(\Phi(T^iu)-\zeta)|\\
& \leq \frac{b_n}{a_{n+1}}M + |\frac{1}{a_{n+1}}\sum_{j=1}^{l'_n\kappa}\sum_{i=b_n+(j-1)(K+P)}^{b_n+jK+(j-1)P-1}(\Phi(T^iu)-\zeta)| \\
&\qquad + |\frac{1}{a_{n+1}}\sum_{j=1}^{l'_n\kappa}\sum_{i=b_n+jK+(j-1)P}^{b_n+j(K+P)-1}(\Phi(T^iu)-\zeta)|\\
&\leq \frac{b_n}{a_{n+1}}M \\
&+ \frac{1}{a_{n+1}}\sum_{j=1}^{l'_n\kappa}\sum_{i=b_n+(j-1)(K+P)}^{b_n+jK+(j-1)P-1}|\Phi(T^iu) - \Phi(T^{i-b_n-(j-1)(K+P)}y^{(j)})|\\
&\qquad+ |\frac{1}{a_{n+1}}\sum_{j=1}^{l'_n\kappa}\sum_{i=b_n+(j-1)(K+P)}^{b_n+jK+(j-1)P-1}(\Phi(T^{i-b_n-(j-1)(K+P)}y^{(j)} - \zeta)| \\
&\qquad+ \frac{1}{a_{n+1}}\sum_{j=1}^{l'_n\kappa}\sum_{i=b_n+jK+(j-1)P}^{b_n+j(K+P)-1}|\Phi(T^iu)-\zeta|\\
&\leq \frac{b_n}{a_{n+1}}M + \frac{l'_n\kappa K}{a_{n+1}}\eta + |\frac{1}{a_{n+1}}\sum_{j=1}^{l'_n\kappa}\sum_{i=0}^{K-1}(\Phi(T^iy^{(j)})-\zeta)| + \frac{l'_n\kappa P}{a_{n+1}}M\\
&\leq 3\eta + |\frac{1}{a_{n+1}}\sum_{j=1}^{l'_n\kappa}\sum_{i=0}^{K-1}(\Phi(T^iy^{(j)})-\zeta)|.
\end{align*}

The last component is bounded as follows:
\begin{align*}
|\frac{1}{a_{n+1}}\sum_{j=1}^{l_n'\kappa}\sum_{i=0}^{K-1}\Phi(T^iy^{(j)})-\zeta|&\leq \frac{K}{a_{n+1}}|\sum_{j=1}^{l_n'\kappa}(\frac{1}{K}\sum_{i=0}^{K-1}\Phi(T^iu)-\zeta)|\\
&\leq \frac{l_n'\kappa K\eta}{4a_{n+1}}\leq \frac{\eta}{4}.
\end{align*}
It follows that:
\begin{equation}\label{limsup}
|\frac{1}{a_{n+1}} \sum_{i=0}^{a_{n+1}-1}\Phi(T^iu)-\zeta| \leq 4\eta.
\end{equation}
Altogether by (\ref{liminf}) and (\ref{limsup}) we have:
$$
\liminf_{n\rightarrow\infty}\frac{1}{n}\sum_{i=0}^{n-1}\Phi(T^iu) \leq \alpha + 4\eta < \zeta-4\eta \leq  \limsup_{n\rightarrow\infty}\frac{1}{n}\sum_{i=0}^{n-1}\Phi(T^iu),
$$
which means that the condition (\ref{BirkhDivCond}) is satisfied and the set $A$ is indeed a subset of the $\Phi$-irregular set $I_{\Phi}(T)$.

The final step of the proof is to estimate the value of topological entropy of the set $A$. Denote $h = h_{\mu_1}(T)-5\gamma$ and recall that by (\ref{VarPrinc}) we have $h  > \htop(T,Y)-6\gamma$. We are going to prove that:
$$
C(A;h,\eps,T)\geq 1,
$$
as it implies $\htop(T,A)\geq h$.
The constructed set $A$ is compact, so we can restrict our attention to a finite covers 
of $A$. We will show that for every sufficiently large $n$ we have:
$$
\sum_{B_v(x,\eps)\in \mathcal{C}}e^{-hv}\geq 1
$$
for every finite cover $\mathcal{C} \in \mathcal{G}_n(A,\eps)$. Of course the choice of $n$ affects the choice of the admissible coverings $\mathcal{C}$ of the set $A$ in the above sum.

Fix an integer $q$ such that:
\begin{equation}\label{qcond}
\frac{q+1}{q}\leq \frac{h_{\mu}(T)-3\gamma}{h_{\mu}(T)-5\gamma}
\end{equation}
and a sufficiently large $r_0$ such that for each $r\geq r_0$ we have:
\begin{equation}\label{1/qcond}
\frac{\lambda(L+Q)}{a_r}<\frac{1}{q}.
\end{equation}
By $\mathcal{A} = \{A_1,\dots A_t,\}$ denote the alphabet of cardinality $t = \min\{ (|E'_L|^l, (|\Gamma_K|^k \}$. Each symbol of that alphabet will uniquely represent blocks of the $\delta$-pseudo-orbit $\mathcal{Z}$ in the following way. The blocks in each $\mathcal{Z}$ are of the following possible types:
\begin{enumerate}
	\item[(D1)] blocks of length $\lambda(L+Q)$ consisting of the parts of the orbit of some points $x \in E'_L$ intertwined by $\delta$-chains from $V$ to $U$,
	\item[(D2)] blocks of length $\kappa(K+P)$ consisting of pseudo-orbits from $\Gamma_K$ intertwined by $\delta$-chains from $V'$ to $U$.
\end{enumerate}
Using the above splitting every $\delta$-pseudo-orbit uniquely defines an infinite word over the alphabet $\mathcal{A}$. We restrict the number of the blocks of types (D1), (D2) when necessary, so that there are exactly $t$ choices of each type.
Using that representation, given some $\delta$-pseudo-orbit $\mathcal{Z}$ denote by $S_m = |A_1\dots A_m|$ the total length of prefix built from $n$ blocks of the form (D1), (D2) in the splitting of $\mathcal{Z}$, whose code is $A_1,\dots,A_m$.
By the definition of $S_m$ there are $r>0$ and $0\leq s < 2l_r$ such that:
\begin{eqnarray}
S_m &=& a_r+s\lambda (L+Q),\label{def:ar}\\
S_{m+1} &=& S_m + \lambda(L+Q)\nonumber.
\end{eqnarray}

For every $x \in A$ we have a unique $\delta$-pseudo-orbit $\mathcal{Z} = \{z_i\}_{i=0}^{\infty}$ derived from the construction (*) such that $d(T^ix,z_i)\leq \eps$. This $\mathcal{Z}$ defines a unique sequence of symbols $A_1,A_2\dots$ and associated increasing sequence of integers $S_m$. Therefore every cover $\mathcal{C} \in \mathcal{G}_{n}(A,\eps)$ induces a cover $\mathcal{C}'$ where each ball $B_v(x,\eps) \in \mathcal{C}$ is replaced by $B_{S_m}(x,\eps)\in \mathcal{C}'$ such that $S_m\leq v < S_{m+1}$.
We may assume $a_r$ from \eqref{def:ar} satisfies \eqref{1/qcond} since we are interested only in large values of $n$, and this easily ensures $r>r_0$. Observe that by the definition:
\begin{equation}\label{BowenBallEst}
\sum_{B_v(x,\eps)\in \mathcal{C}}e^{-hv}\geq \sum_{B_{S_m}(x,\eps)\in \mathcal{C}'}e^{-hS_{m+1}}.
\end{equation}
Consider such a cover $\mathcal{C}'$ and let $c$ be the largest value $m$ used for replacement of $B_v(x,\eps)$ by $B_{S_m}(z,\eps) \in \mathcal{C}'$. 
By $\mathcal{W}_m$ define the set of all possible words of length $m$ over $\mathcal{A}$, while by $\mathcal{V}_c$ denote the set of all possible words of length at most $c$, that is:
$$
\mathcal{V}_c = \bigcup_{m=1}^c\mathcal{W}_m.
$$
Any point $p \in \mathcal{W}_m$ is a unique representation of some prefix of lengths $m$ of a $\delta$-pseudo-orbit. Furthermore, observe that each word of length $m$ over $\mathcal{A}$ is a prefix of exactly $\frac{|\mathcal{W}_c|}{|\mathcal{W}_m|}$ words from $\mathcal{W}_c$. Therefore, if we consider a set $\mathcal{W}\subseteq \mathcal{V}_c$ containing some prefixes of all words from $\mathcal{W}_c$, then:
$$
\sum_{m=1}^c |\mathcal{W}\cap \mathcal{W}_m|\frac{|\mathcal{W}_c|}{|\mathcal{W}_m|}\geq |\mathcal{W}_c|,
$$
as for every word in $\mathcal{W}_c$ one of its prefixes has to be in $\mathcal{W}\cap \mathcal{W}_m$ for some $1\leq m\leq c$ and the number of words in $\mathcal{W}_c$ with that prefix cannot exceed $\frac{|\mathcal{W}_c|}{|\mathcal{W}_m|}$. By the above discussion, if $\mathcal{W}$ contains a prefix of every word from $\mathcal{W}_c$ then:
$$
\sum_{m=1}^c \frac{|\mathcal{W}\cap\mathcal{W}_m|}{|\mathcal{W}_m|}\geq 1.
$$

Recall that any $x \in B_{S_m}(z,\eps)$ defines uniquely a word $A_{i_1},\dots, A_{i_m} \in\mathcal{A}$, $i_1,\dots,i_m \in \{1,\dots,t\}$ such that $x_{[0,S_m]} \approx A_{i_1}\dots A_{i_m} \in \mathcal{W}_c$, where $\approx$ denotes the identification of pseudo-orbit with symbols in $\mathcal{A}$. We claim that the block $x_{[0,S_m]}$ defines the same $m$-letter words as any $z \in B_{S_m}(x,\eps)\cap A$. Assume on the contrary that $z_{[0,S_m]}\approx A_{j_1}\dots A_{j_m}$ and $i_\iota\neq j_\iota$ for some $\iota$. Then in case of block of the form (C1) there are $p,q \in E'_L$, $p\neq q$ (the case of (C2) is analogous with $p,q \in E'_{\xi J}$) and $1\leq i\leq m $ and $0\leq \kappa<l$ such that:
$$
T^{S_{i-1}+\kappa(L+Q)}z \in B_L(p,\eps) \text{ and } T^{S_{i-1}+\kappa(L+Q)}x \in B_L(q,\eps)
$$
and so for $0\leq j< L$ we also have:
\begin{multline*}
d(T^jp,T^jq) \leq d(T^jp,T^{S_{i-1}+\kappa(L+Q)+j}z)\\ + d(T^{S_{i-1}+\kappa(L+Q)+j}z,T^{S_{i-1}+\kappa(L+Q)+j}x) \\+ d(T^{S_{i-1}+\kappa(L+Q)+j}x, T^jq)  <3\eps.
\end{multline*}
This is the contradiction, as the set $E'_L$ is $(L,4\eps)$-separated. Indeed the claim holds, that is each $B_{S_m}(z,\eps)$ defines a unique word over $\mathcal{A}$.
This immediately implies that:
\begin{equation}\label{WmBallEst}
\sum_{B_{S_m}(x,\eps)\in \mathcal{C}'}\frac{1}{|\mathcal{W}_m|}\geq 1.
\end{equation}
By (\ref{GammaK}) and $(\ref{LKrelation})$ we have the following estimation for the cardinality of set $\mathcal{W}_m$:
\begin{equation}\label{WmCard}
|\mathcal{W}_m| = t^m \geq e^{m\lambda L(\htop(T,Y)-3\gamma)}.
\end{equation}
Moreover by (\ref{qcond}) and (\ref{1/qcond}):
\begin{align*}\label{SEst}
\frac{S_{m+1}}{S_m} &= 1 + \frac{\lambda(L+Q)}{S_m} \leq 1+\frac{\lambda(L+Q)}{a_r}\\
&\leq \frac{q+1}{q} \leq \frac{h_{\mu}(T)-3\gamma}{h_{\mu}(T)-5\gamma}.
\end{align*}
Combining (\ref{BowenBallEst}), (\ref{WmBallEst}), (\ref{WmCard}) we get:
\begin{align*}
\sum_{B_v(x,\eps) \in \mathcal{C}}e^{-hv}&\geq \sum_{B_{S_m}(x,\eps) \in \mathcal{C}'}e^{-S_m(\htop(T,Y)-3\gamma)}\\
&\geq \sum_{B_{S_m}(x,\eps) \in \mathcal{C}'}e^{-m\lambda L(\htop(T,Y)-3\gamma)} \geq 1.
\end{align*}
That implies:
$$
\htop(T,A)\geq h_{\mu_1}(T)-5\gamma > \htop(T,Y)-6\gamma.
$$
The proof is finished.
\end{proof}

\begin{cor}\label{cor:CRclass}
	Let $(X,T)$ be a dynamical system with the shadowing property and let $Y\subset X$ be a chain recurrent class. If $\Phi \in \mathcal{C}(X,\mathbb{R})$ is such that $I_\Phi(T)\cap Y \neq \emptyset$ then $\htop(T,I_\Phi(T))\geq \htop(T,Y)$.
\end{cor}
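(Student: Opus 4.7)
The plan is to reduce the corollary directly to Theorem~\ref{thmhtop}. The only missing hypothesis is the existence of two ergodic measures $\mu_1,\mu_2\in\mathcal{M}_e(Y)$ with $\int\Phi\,d\mu_1\neq\int\Phi\,d\mu_2$, so the entire argument will consist in producing such measures from a single irregular point $x\in I_\Phi(T)\cap Y$.

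Pick any $x\in I_\Phi(T)\cap Y$. By definition of the $\Phi$-irregular set, the Birkhoff averages $\frac{1}{n}\sum_{i=0}^{n-1}\Phi(T^i x)$ fail to converge, so there exist two subsequences $n_k,m_k\to\infty$ and distinct reals $\alpha\neq\beta$ such that these averages along $n_k$ tend to $\alpha$ and along $m_k$ tend to $\beta$. Consider the empirical measures $\nu_{n_k}=\frac{1}{n_k}\sum_{i=0}^{n_k-1}\delta_{T^i x}$ and $\nu_{m_k}=\frac{1}{m_k}\sum_{i=0}^{m_k-1}\delta_{T^i x}$. By compactness of $\mathcal{M}(X)$ in the weak$^*$-topology, after passing to further subsequences we may assume $\nu_{n_k}\to\mu$ and $\nu_{m_k}\to\nu$. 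A standard argument shows both $\mu,\nu$ are $T$-invariant, and by weak$^*$ convergence $\int\Phi\,d\mu=\alpha\neq\beta=\int\Phi\,d\nu$. Furthermore both measures are supported on $\omega_T(x)$, and since $Y$ is a chain recurrent class we have $\omega_T(x)\subseteq Y$ by Remark~\ref{omegaSub}, so $\mu,\nu\in\mathcal{M}_T(Y)$.

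To pass to ergodic measures, invoke the ergodic decomposition: write $\mu=\int\mu_\omega\,dP_\mu(\omega)$ and $\nu=\int\nu_\omega\,dP_\nu(\omega)$, where $P_\mu$-a.e.\ $\mu_\omega$ and $P_\nu$-a.e.\ $\nu_\omega$ are ergodic and have support inside $\supp(\mu)\cup\supp(\nu)\subseteq Y$. Since $\int\Phi\,d\mu=\int\bigl(\int\Phi\,d\mu_\omega\bigr)dP_\mu(\omega)=\alpha$ and $\int\Phi\,d\nu=\beta$ with $\alpha\neq\beta$, one of the two decompositions must contain an ergodic component whose $\Phi$-integral is not equal to $\int\Phi\,d\mu$ or $\int\Phi\,d\nu$; more cleanly, there must exist an ergodic component $\mu_1$ of $\mu$ with $\int\Phi\,d\mu_1\leq\alpha$ and an ergodic component $\mu_2$ of $\nu$ with $\int\Phi\,d\mu_2\geq\beta$, so that $\int\Phi\,d\mu_1<\int\Phi\,d\mu_2$. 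Both $\mu_1,\mu_2$ lie in $\mathcal{M}_e(Y)$.

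Applying Theorem~\ref{thmhtop} to this pair $\mu_1,\mu_2$ yields $\htop(T,I_\Phi(T))\geq\htop(T,Y)$, which is precisely the claim. The only conceptual point requiring care is that the measures produced from the orbit of $x$ really are supported inside $Y$; this is immediate from invariance and closedness of the chain recurrent class, as recorded in Remark~\ref{omegaSub}. Everything else is bookkeeping with the ergodic decomposition and weak$^*$ convergence.
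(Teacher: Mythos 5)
Your proof is correct and follows the same route as the paper: the paper's own argument likewise reduces to Theorem~\ref{thmhtop} by extracting, from the divergent Birkhoff averages of a point $x\in I_\Phi(T)\cap Y$, two ergodic measures on $Y$ with distinct $\Phi$-integrals, citing \cite[Lemma~2.1]{Thom} for exactly the empirical-measure-plus-ergodic-decomposition step that you spell out in detail.
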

\begin{proof}
	Since $I_\Phi(T)\cap Y\neq \emptyset$, there is a point $x\in Y$ such that $\lim_{n\to\infty} \frac{1}{n}\sum_{i=0}^{n-1}\Phi(T^ix)$ does not exist. By ergodic decomposition \cite[Lemma~2.1]{Thom} we have that there are ergodic measures $\mu_1, \mu_2 \in \mathcal{M}_e(Y)$ such that $\int\Phi d\mu_1\neq \int\Phi d\mu_2$. The result follows by Theorem \ref{thmhtop}.
\end{proof}

By Corollary~\ref{cor:CRclass} we obtain that:
$$
\htop(T,I_{\Phi}(T)) \geq \sup \left\{\htop(T,Y): I_{\Phi}(T)\cap Y \neq \emptyset \text{ and }Y\in C_c(X,T) \right\}.
$$
where $C_c(X,T)$ denotes the set of all chain recurrent classes of $(X,T)$. 
Unfortunately, we were unable to prove that the inequality cannot be strict in the above formula.
However, if we slight change the above set, it ensures equality as shown below.
\begin{cor}\label{CorSup}
Let $(X,T)$ be a dynamical system with the shadowing property and let $\Phi:X\rightarrow \mathbb{R}$ be a continuous function. Then we have:
\begin{equation}\label{EQ:EXT}
\htop(T,I_{\Phi}(T)) = \sup\{\htop(T,Y):I_{\Phi}(T)\cap Y^{\omega}\neq\emptyset \text{ and }Y\in C_c(X,T) \}.
\end{equation}
\begin{proof}
Fix any chain-recurrent class $Y$ such that $I_{\Phi}(T)\cap Y^{\omega}\neq\emptyset$.
Take any $x~\in~Y^\omega$ and let $V_T(x)$ denote the weak$^*$ limit set of $\frac{1}{n}\sum_{i=0}^{n-1}\delta_{T^ix}$. Any measure $\mu~ \in~V_T(x)$ is supported on $\omega_T(x)$, so it is supported on $Y$ as well. Hence there are measures $\mu_1, \mu_2$ with supports in $Y$ such that $\int \Phi d\mu_1 \neq \int \Phi d\mu_2$. By ergodic decomposition (see \cite[Lemma 2.1]{Thom}) we may assume that $\mu_1, \mu_2 $ are ergodic and so we may apply Theorem \ref{thmhtop} which proves that $\htop(T,I_{\Phi}(T))\geq \htop(T,Y)$. This proves ``$\geq$" in \eqref{EQ:EXT}.

To prove the opposite inequality ``$\leq$" in \eqref{EQ:EXT} we start by showing that: $$\htop(T,Y^{\omega})=\htop(T,Y).$$
We need the result from \cite{Bow} which states that letting:
$$QR(t)=\{ x \in X: \text{ there exists } \mu \in V_T(x) \text{ with }h_{\mu}(T)\leq t \},$$
we have $\htop(T,QR(t))\leq t$. Taking $t = \sup_{\mu \in V_T(x), x \in Y^{\omega}}h_{\mu}$ we see that $Y^{\omega}~\subseteq~QR(t)$ which gives:
\begin{eqnarray}\label{htopeq}
h_{top}(T,Y^{\omega}) &\leq & \sup\{h_{\mu}:\mu \in V_T(x),\; x \in Y^{\omega}\}\\
&\leq& \sup\{h_{\mu}: \supp(\mu)\subseteq Y\} = h_{top}(T,Y).\nonumber
\end{eqnarray}
while the opposite inequality is obvious by inclusion $Y\subset Y^\omega$. This way we obtain that if we denote:
$$
\hat{t} = \sup\{\htop(T,Y): I_{\Phi}(T)\cap Y^{\omega}\neq \emptyset \text{ and }Y\in C_c(X,T)\}
$$
then:
$$
\hat{t} = \sup\{\htop(T,Y^\omega): I_{\Phi}(T)\cap Y^{\omega}\neq \emptyset \text{ and }Y\in C_c(X,T)\}.
$$

Finally observe that if $x\in I_{\Phi}(T)$ then there is a chain-recurrent class $Y$ such that $\omega_T(x)\subset Y$ (e.g. see \cite{HSZ}) and so $I_{\Phi}(T)\cap Y^\omega\neq \emptyset$.
Therefore
$$
I_{\Phi}(T) \subseteq \bigcup_{Y \in C_c(X,T), Y^{\omega}\cap I_{\Phi}(T)\neq \emptyset}Y^{\omega}\subseteq QR(\hat{t}).
$$
and the proof is completed by the above mentioned result from \cite{Bow}.
\end{proof}
\end{cor}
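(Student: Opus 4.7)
My plan is to handle the two inequalities separately. For ``$\geq$'', I would fix any chain-recurrent class $Y$ with $I_\Phi(T)\cap Y^\omega \neq \emptyset$ and pick a point $x$ in that intersection. Since $\omega_T(x)\subseteq Y$, every weak$^*$-limit of the empirical measures $\frac{1}{n}\sum_{i=0}^{n-1}\delta_{T^ix}$ is $T$-invariant and supported on $Y$. Divergence of the Birkhoff averages of $\Phi$ at $x$ forces at least two such limits $\mu_1,\mu_2$ to satisfy $\int \Phi\, d\mu_1\neq \int \Phi\, d\mu_2$; passing to ergodic components via \cite[Lemma~2.1]{Thom}, I may assume $\mu_1,\mu_2\in \mathcal{M}_e(Y)$. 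Theorem~\ref{thmhtop} then gives $\htop(T,I_\Phi(T))\geq \htop(T,Y)$, and taking the supremum over all admissible $Y$ yields the ``$\geq$'' direction.

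For the reverse inequality, the main preparatory step is to prove that $\htop(T,Y^\omega)=\htop(T,Y)$ for every chain-recurrent class $Y$. One direction is immediate from $Y\subseteq Y^\omega$. For the other, I would invoke Bowen's theorem from \cite{Bow} stating that $\htop(T,\QR(t))\leq t$, where $\QR(t)=\{x\in X:\exists \mu\in V_T(x)\text{ with }h_\mu(T)\leq t\}$ and $V_T(x)$ denotes the weak$^*$-limit set of the empirical measures of $x$. Every $\mu\in V_T(x)$ with $x\in Y^\omega$ has $\supp(\mu)\subseteq Y$, so by the variational principle $h_\mu(T)\leq \htop(T,Y)$. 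Hence $Y^\omega\subseteq \QR(\htop(T,Y))$, giving $\htop(T,Y^\omega)\leq \htop(T,Y)$.

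To conclude, denote by $\hat t$ the supremum on the right-hand side of \eqref{EQ:EXT}. I would use the standard fact (e.g.\ \cite{HSZ}) that every $\omega$-limit set is internally chain transitive and so lies in a single chain-recurrent class. Consequently every $x\in I_\Phi(T)$ belongs to $Y^\omega$ for exactly one $Y\in C_c(X,T)$, and that $Y$ automatically contributes to the supremum defining $\hat t$. Combined with the entropy equality of the previous paragraph, this gives
\[
I_\Phi(T)\;\subseteq\!\!\!\bigcup_{\substack{Y\in C_c(X,T)\\ Y^\omega\cap I_\Phi(T)\neq\emptyset}}\!\! Y^\omega\;\subseteq\;\QR(\hat t),
\]
and one more application of Bowen's bound yields $\htop(T,I_\Phi(T))\leq \hat t$.

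The main obstacle I expect is the very first step: certifying that a single irregular point in $Y^\omega$ supplies \emph{two} distinct ergodic measures on $Y$ with different $\Phi$-integrals, which requires combining the divergence of the Birkhoff averages with a careful use of ergodic decomposition. Once Theorem~\ref{thmhtop} is applied, the rest reduces to assembling the variational principle with Bowen's entropy estimate for $\QR(t)$ and the chain-recurrence structure of $\omega$-limit sets.
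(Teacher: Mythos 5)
Your proposal is correct and follows essentially the same route as the paper: the ``$\geq$'' direction via the weak$^*$ limit set of empirical measures of an irregular point in $Y^\omega$, ergodic decomposition, and Theorem~\ref{thmhtop}; the ``$\leq$'' direction via the identity $\htop(T,Y^\omega)=\htop(T,Y)$ obtained from Bowen's bound on $\QR(t)$, followed by covering $I_\Phi(T)$ by the sets $Y^\omega$ and applying Bowen's bound once more. The step you flag as the main obstacle is handled exactly as in the paper, so there is nothing further to add.
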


\subsection{Typical $\Phi$-irregular sets}

For any dynamical system  $(X,T)$, from \cite[Lemma 3.3]{Tian2017} the set
$$\mathcal C^*:=\{\Phi\in C^0(X,\mathbb{R})\,:\,I_\Phi(T) \text{ is  not empty}\}$$
is either empty or is an open and dense subset in the space of continuous functions.
Suppose that $(X,T)$ has the shadowing property and has positive topological entropy. By \cite{DOT} the set of irregular points $I(T)$ is not empty and carries full topological entropy, in particular  there exists some $\Phi$ with $I_{\Phi}(T)\neq \emptyset.$ Thus  $\mathcal C^*$  is an open and dense subset in the space of continuous functions. On one hand, if additionally $f$~is transitive, then by Theorem~\ref{thmhtop} we obtain that:
$$\mathcal C^*=\{\Phi\in C^0(X,\mathbb{R})\,: \,I_\Phi(T) \text{ is  not empty and carries full topological entropy }\}$$
so that the later set is an open and dense subset in the space of continuous functions. On the other hand, if the system is not transitive, one can have some $\Phi$ such that  $I_\Phi$ is  not empty but does not carry full topological entropy.
For example, suppose that the system $(X,T)$ is composed by two disjoint transitive subsystem $(X_i,f_i)$, $i=1,2$ with shadowing for  which $f_{1}$  has entropy larger than the one of $f_{2}$ and suppose that $f_{2}$ has at least two invariant measures, then one can take a continuous function $\Phi$~such that $\Phi |_{X_1}=0$ but
$$\inf_{\mu\in \mathcal{M}_{f_2}(X_2)} \int \Phi d\mu<\sup_{\mu\in \mathcal{M}_{f_2}(X_2)}\int \Phi d\mu.$$
In this case $I_{\Phi}(T)$ is not empty and carries entropy equal to the one of $f_2$ (not equal to the entropy of the whole system). However, we can show that  ``most'' functions still have the property that  $I_\Phi(T)$ is  not empty and carries full topological entropy.

\begin{thm}\label{thmtyp}
Suppose that $(X,T)$ has the shadowing property and positive topological entropy. Then:
$$
\mathcal{R}=\{\Phi\in C^0(X,\mathbb{R}): I_\Phi(T) \text{ is  not empty and carries full topological entropy }\}
$$
is a dense  $G_\delta$ subset   in $C^{0}(X,\mathbb{R})$. In fact, for any $\eps>0$:
\begin{multline*}
\mathcal{R}_{\eps}=\{\Phi\in C^0(X,\mathbb{R}): I_\Phi(T) \text{ is  not empty} \\ \text{and carries entropy larger than }h_{top}(T)-\eps\}
\end{multline*}
is an open and dense  subset in $C^{0}(X,\mathbb{R})$.
\end{thm}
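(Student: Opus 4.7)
The plan is to show that each $\mathcal{R}_\eps$ is open and dense in $C^0(X,\mathbb{R})$. Because $\htop(T,I_\Phi(T))\le \htop(T)$ holds automatically, we have $\mathcal{R}=\bigcap_{n\ge 1}\mathcal{R}_{1/n}$, and so the Baire category theorem will then yield the desired dense $G_\delta$ conclusion.

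For openness, I fix $\Phi\in\mathcal{R}_\eps$ and apply Corollary~\ref{CorSup} to extract a chain recurrent class $Y$ with $\htop(T,Y)>\htop(T)-\eps$ and $Y^\omega\cap I_\Phi(T)\neq\emptyset$. The argument inside the proof of Corollary~\ref{CorSup} applied to any point of this intersection produces ergodic measures $\mu_1,\mu_2\in\mathcal{M}_e(Y)$ with $\int\Phi\,d\mu_1\neq\int\Phi\,d\mu_2$. For any $\Phi'\in C^0(X,\mathbb{R})$ whose sup-norm distance to $\Phi$ is strictly less than one third of $|\int\Phi\,d\mu_1-\int\Phi\,d\mu_2|$, the corresponding integrals against $\mu_1,\mu_2$ remain distinct, so Theorem~\ref{thmhtop} delivers $\htop(T,I_{\Phi'}(T))\ge\htop(T,Y)>\htop(T)-\eps$. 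Hence $\Phi'\in\mathcal{R}_\eps$ and the $(c/3)$-ball around $\Phi$ lies in $\mathcal{R}_\eps$.

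For density, I fix $\Psi\in C^0(X,\mathbb{R})$ and $\eta>0$. Since every ergodic measure is supported on a single chain recurrent class, the variational principle gives $\htop(T)=\sup\{\htop(T,Y):Y\in C_c(X,T)\}$, so I can choose $Y\in C_c(X,T)$ with $\htop(T,Y)>\htop(T)-\eps>0$. Under shadowing, such $Y$ must admit at least two distinct ergodic measures $\mu_1\neq\mu_2$ (see the obstacle paragraph below). I then pick $g\in C^0(X,\mathbb{R})$ with $\int g\,d\mu_1\neq\int g\,d\mu_2$ and consider $\Phi_t:=\Psi+tg$: the map $t\mapsto\int\Phi_t\,d\mu_1-\int\Phi_t\,d\mu_2$ is affine with non-zero slope, hence vanishes for at most one $t\in(0,\eta/(1+\|g\|_\infty))$. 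Fixing such a $t$ produces $\Phi=\Phi_t$ with $\|\Phi-\Psi\|_\infty<\eta$ and $\int\Phi\,d\mu_1\neq\int\Phi\,d\mu_2$, whence Theorem~\ref{thmhtop} gives $\htop(T,I_\Phi(T))\ge\htop(T,Y)>\htop(T)-\eps$, i.e.\ $\Phi\in\mathcal{R}_\eps$.

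The main obstacle is justifying that a chain recurrent class $Y$ of positive topological entropy in a shadowing system supports at least two distinct ergodic measures. I would handle this through periodic points: in shadowing systems, periodic points are dense in the chain recurrent set, so $Y$ contains periodic orbits, and since a single periodic orbit has zero topological entropy, the inequality $\htop(T,Y)>0$ forces $Y$ to carry at least two distinct periodic orbits, whose uniformly distributed invariant measures furnish the required pair $\mu_1\neq\mu_2$. Alternatively, the pair can be extracted from the \cite{DOT} theorem on full-entropy irregular sets combined with Corollary~\ref{CorSup}: a chain recurrent class $Y$ of entropy close to $\htop(T)$ whose saturation $Y^\omega$ meets $I(T)$ yields a point with divergent Birkhoff average, whose ergodic decomposition inside $Y$ produces the required measures.
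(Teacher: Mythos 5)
Your overall architecture (Baire category via $\mathcal{R}=\bigcap_n\mathcal{R}_{1/n}$, openness and density of each $\mathcal{R}_\eps$, with Theorem~\ref{thmhtop} as the engine and a linear perturbation by a separating function) matches the paper, and your openness argument is essentially the paper's: you route it through Corollary~\ref{CorSup} while the paper invokes Bowen's $\QR(t)$ bound directly to locate a point of $I_\Phi(T)$ all of whose limit measures have large entropy, but the two are the same argument and your stability estimate for the integrals under a $C^0$-perturbation is correct.

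The density half, however, has a genuine gap. Your primary justification for the existence of two distinct ergodic measures on the chosen class $Y$ rests on the claim that shadowing implies periodic points are dense in the chain recurrent set. That is false for general continuous maps: shadowing lets you trace a periodic $\delta$-pseudo-orbit, but without expansivity the tracing point need not be periodic. An odometer has the shadowing property, is minimal (hence everywhere chain recurrent), and has no periodic points at all; what shadowing actually yields (Moothathu, Li--Oprocha) is density of regularly recurrent points and of orbit closures that are odometers, not of periodic orbits. So your chosen $Y$ could, as far as your argument shows, fail to contain any periodic point, and you have not ruled out that it is uniquely ergodic. The paper closes exactly this gap by citing \cite[Corollary 3.5]{DOT}, which under shadowing and positive entropy produces two distinct ergodic measures $\mu\neq\nu$ with $h_\mu(T)>h_{\mathrm{top}}(T)-\eps$ and $\supp(\nu)\subseteq\supp(\mu)$; the nested supports place both measures in a single chain recurrent class, after which the perturbation argument proceeds as you wrote it. Your alternative sketch (using that $I(T)$ carries full entropy) points in a workable direction, but as stated it is incomplete: $I(T)=\bigcup_\Phi I_\Phi(T)$ is an uncountable union, so you would still need to extract a single $\Phi$ with $h_{\mathrm{top}}(T,I_\Phi(T))>h_{\mathrm{top}}(T)-\eps$ (e.g.\ via countable stability of Bowen entropy over a countable dense family of functions) before Corollary~\ref{CorSup} can hand you a class with two separated ergodic measures. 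Finally, a cosmetic point: when $\eps\geq h_{\mathrm{top}}(T)$ your inequality $h_{\mathrm{top}}(T,Y)>h_{\mathrm{top}}(T)-\eps>0$ is vacuous; one should simply take $Y$ with $h_{\mathrm{top}}(T,Y)>0$ in that case.
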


\begin{proof}
Clearly $\mathcal R=\bigcap_n \mathcal R_{\frac1n}$ so by Baire theorem it is enough to prove the second statement.

Fix any $\eps>0.$  First we are going to show that $\mathcal{R}_{\eps}$ is dense.
By \cite[Corollary 3.5]{DOT} there are two ergodic measures $\mu$ and $\nu$ such that $h_{\mu}(T)>h_{top}(T)-\eps$ and $\mu\neq \nu,$ $\supp(\nu)\subseteq \supp(\mu)$.
By the definition of weak$^*$ topology, there exists some continuous function $\Phi_\eps$ such that $\int\Phi_\eps d\mu\neq \int \Phi_\eps d\nu.$
Note that by ergodicity $T$ restricted to $\supp(\mu)$ is transitive, in particular $\supp(\mu)$ is contained in a chain recurrent class $Y$. Therefore by  Theorem~\ref{thmhtop} we see that
$$
h_{top}(I_{\Phi_\eps}(T))\geq h_{top}(T|_Y)\geq h_{\mu}(T)>h_{top}(T)-\eps.
$$

For any given continuous function $\Phi$,  if  $\inf_{\mu\in \mathcal{M}_T(Y)}\int\Phi d\mu<\sup_{\mu\in \mathcal{M}_{T}(Y)}\int\Phi d\mu,$ then by Theorem~\ref{thmhtop} together with ergodic decomposition theorem we obtain that $\Phi\in \mathcal{R}_{\eps}. $

In the second case, that is when $\inf_{\mu\in \mathcal{M}_{T}(Y)}\int\Phi d\mu=\sup_{\mu\in \mathcal{M}_{T}(Y)}\int\Phi d\mu,$ consider functions $\Phi_{n}=\Phi+
\frac1n\Phi_{\epsilon}$  which converge to $\Phi$ and satisfy:
$$
\inf_{\mu\in \mathcal{M}_{T}(Y)}\int\Phi_n d\mu<\sup_{\mu\in \mathcal{M}_{T}(Y)}\int\Phi_n d\mu.
$$
By the previous case, we see that $\Phi_n\in \mathcal{R}_{\eps}$ for every $n$. Indeed   $\mathcal{R}_{\eps}$ is dense.

Now we will show that $\mathcal{R}_{\eps}$ is also open. Once again we will need
result of Bowen from \cite{Bow} which states that letting
$$QR(t)=\{ x \in X: \text{ there exists } \mu \in V_T(x) \text{ with }h_{\mu}(T)\leq t \},$$
we have $\htop(T,QR(t))\leq t$. Take any $\Phi\in \mathcal{R}_{\eps}. $
We claim that there exists a point $y\in I_{\Phi}(T)$   such that  every $\mu\in V_{T}(y)$  we have $h_{\mu}(T)>h_{top}(T)-\epsilon$.
Otherwise, $ I_{\Phi}(T) \subseteq QR(\htop(T)-\eps)$ and Bowen's result implies that $h_\top(I_{\Phi}(T))\leq \htop(T)-\eps$
which is in contradiction to $\Phi\in \mathcal{R}_{\eps}$. This proves the claim.
In other words, there is $y\in I_{\Phi}(T)$ and measures $\mu,\nu\in V_T(y)$ such that $ \int\Phi d\mu\neq \int \Phi d\nu.$
Recall that $\cup_{\mu\in V_T(y)} \supp(\mu)\subseteq \omega_T(y).$ It is well known (e.g. see \cite{HSZ}) that $\omega_T(y)$ is always a~subset of some chain recurrent class, say $Y$, and consequently $V_T(y)\subseteq \mathcal M_{T}(Y)$. Thus:
\begin{align*}
h_{top}(Y)&=\sup\{h_\mu(T)|\, \mu\in \mathcal M_{T}(Y)\}\\
&\geq \sup\{h_\mu(T)|\, \mu\in V_T(x)\} \geq h_{\mu}(T)>h_{top}(T)-\eps
\end{align*}
and  $\inf_{\mu\in \mathcal{M}_{T}(Y)}\Phi d\mu<\sup_{\mu\in \mathcal{M}_{T}(Y)}\Phi d\mu.$
If we take sufficiently small open neighborhood $\mathcal U$ of $\Phi$ in $C^0(X,\mathbb{R})$ with supremum metric, then for any $\Psi\in U$ we will also have $\inf_{\mu\in \mathcal{M}_{T}(Y)}\int\Psi d\mu<\sup_{\mu\in \mathcal{M}_{T}(Y)}\int\Psi d\mu$. Using once again Theorem~\ref{thmhtop} we see that $\Psi\in \mathcal{R}_{\epsilon}.$
This completes the proof.
\end{proof}

\subsection{Level sets with respect to reference measure and shadowing}

The considerations below have their motivation in \cite{Young}. We assume that $m \in \mathcal{M}$ is a~finite Borel measure on $X$ and think of it as of a reference measure.
Define:
$$
h_m(T,x) = \lim_{\eps\rightarrow 0}\limsup_{n\rightarrow\infty}-\frac{\log m(B_n(x,\eps))}{n}
$$
and
$$
h_m(T,\nu) = \nu - \esssup h_m(T,x),
$$
where :
$$\nu - \esssup h_m(T,x) = \inf\left\{ \alpha \in \mathbb{R}: \nu\left(\{x \in X:h_m(T,x)>\alpha\}\right)=0\right\}.$$
Note that the measure $m$ need not be even $T$-invariant, however if we take $m$ as an ergodic measure, then $h_{m}(T,x) = h_{m}(T)$ $m$-a.e. by \cite{BK}. In particular if $m=\nu$ and $\nu$ is ergodic then $h_m(T,\nu) = h_{\nu}(T)$.
Now define the set:
\begin{multline*}
\mathcal{V}^-=\{\xi \in C(X,\mathbb R): \text{ there exist an arbitrarily small } \eps>0 \text{ and } C=C(\eps)\\
\text{ such that, for all }x \in X \text{ and }n \geq 0: m(B_n(x,\eps))\geq C e^{-\sum_{i=0}^{n-1}\xi(T^ix)}\}.
\end{multline*}
For bad choice of the reference measure $m$ it may happen that $\mathcal{V}^-$ is empty. However in many cases there are natural candidates for $m$ which also ensures $\mathcal{V}^-\neq \emptyset$ (e.g. see Example \ref{exV}).

For $\Phi \in \mathcal{C}(X,\mathbb{R})$ and $E\subseteq \mathbb{R}$ put:
$$
\underline{R}(\Phi, E)=\liminf_{n\rightarrow\infty}\frac{1}{n}\log m\left(\{x \in X: \frac{1}{n}\sum_{i=0}^{n-1}\Phi(T^ix) \in E\}\right).
$$
In \cite{Young} the author proved the [Theorem 1] 
for dynamical systems with the specification property. Here we state an analogous theorem for dynamical systems with the shadowing property.
\begin{thm}\label{thm:v-}
Let $(X,T)$ be a dynamical system with the shadowing property such that $\htop(T,X)<\infty$. Then for every $\Phi \in \mathcal{C}(X,\mathbb{R})$, every $c \in \mathbb{R}$ and every $\xi \in \mathcal{V}^-$ we have:
\begin{multline*}
\underline{R}(\Phi,(c,\infty))\geq \sup \{h_{\nu}(T)-\int\xi d\nu: \nu \in \mathcal{M}_T(X), \int \Phi d\nu>c,\\ \nu \text{ is supported on some chain recurrent class } Y\subseteq X\}.
\end{multline*}
\end{thm}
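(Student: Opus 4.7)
The plan is to mimic the gluing-plus-shadowing construction from the proof of Theorem \ref{thmhtop}, counting this time disjoint Bowen balls weighted by the lower bound on $m$-measure coming from the definition of $\mathcal{V}^-$.

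Fix $\nu$ as in the statement and a small $\gamma>0$. Because $Y$ is compact and $T$-invariant with $\nu(Y)=1$, the ergodic decomposition $\nu=\int \nu_y\,d\tau(y)$ produces, for $\tau$-a.e.\ $y$, an ergodic measure supported on $Y$. Since $\htop(T,X)<\infty$, the functions $y\mapsto h_{\nu_y}(T)$, $y\mapsto \int\Phi\,d\nu_y$ and $y\mapsto \int\xi\,d\nu_y$ are bounded and their $\tau$-integrals recover $h_\nu(T)$, $\int\Phi\,d\nu$ and $\int\xi\,d\nu$ respectively. A standard law-of-large-numbers sampling argument therefore yields ergodic measures $\mu_1,\dots,\mu_k$ supported on $Y$ together with positive rationals $\lambda_i$ with $\sum_i\lambda_i=1$ such that
\begin{equation*}
\sum_i \lambda_i \int\Phi\,d\mu_i>c,\qquad \sum_i\lambda_i h_{\mu_i}(T)>h_\nu(T)-\gamma,\qquad \sum_i\lambda_i\int\xi\,d\mu_i<\int\xi\,d\nu+\gamma.
\end{equation*}

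Next I would choose $\eps>0$ small enough that $\mathcal{V}^-$ supplies a constant $C=C(\eps)>0$ and that $d(x,y)<\eps$ forces $|\Phi(x)-\Phi(y)|,|\xi(x)-\xi(y)|<\gamma$; then fix $\delta\in(0,\eps)$ so that every $\delta$-pseudo-orbit is $\eps$-shadowed, together with a uniform upper bound $\omega$ on the length of a $\delta$-chain inside $Y$ connecting any two elements of a mesh-$\delta$ open cover of $Y$. For a large parameter $N$ set $n_i=\lfloor\lambda_iN\rfloor$ and $n=\sum_i n_i+k\omega$. The Katok entropy formula applied to each $\mu_i$ furnishes an $(n_i,4\eps)$-separated set $E_i\subseteq \supp(\mu_i)\subseteq Y$ with $|E_i|\geq e^{n_i(h_{\mu_i}(T)-\gamma)}$ and such that for every $x\in E_i$ the averages $\tfrac{1}{n_i}\sum_{j=0}^{n_i-1}\Phi(T^jx)$ and $\tfrac{1}{n_i}\sum_{j=0}^{n_i-1}\xi(T^jx)$ lie within $\gamma$ of $\int\Phi\,d\mu_i$ and $\int\xi\,d\mu_i$. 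For each tuple $(x^{(1)},\dots,x^{(k)})\in E_1\times\cdots\times E_k$ I concatenate the segments $x^{(i)}_{[0,n_i-1]}$ with short $\delta$-chains in $Y$ to obtain a $\delta$-pseudo-orbit of length $n$, and use shadowing to extract a tracing point $y=y(x^{(1)},\dots,x^{(k)})$. Exactly as in the proof of Theorem \ref{thmhtop}, distinct tuples produce $(n,2\eps)$-separated shadowing points, so the Bowen balls $B_n(y,\eps)$ are pairwise disjoint and their number is at least $\prod_i|E_i|\geq e^{n(h_\nu(T)-3\gamma)}$ for $N$ large (the $k\omega$ connecting contribution becoming negligible).

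To finish, by the continuity estimate, the block-average control, and the $O(k\omega/n)$ boundary error, each such $y$ satisfies $\tfrac{1}{n}\sum_{j=0}^{n-1}\Phi(T^jy)>c$ and $\tfrac{1}{n}\sum_{j=0}^{n-1}\xi(T^jy)<\int\xi\,d\nu+2\gamma$, and the same bounds transfer to every $z\in B_n(y,\eps)$ by continuity; hence $B_n(y,\eps)\subseteq\{z:\tfrac{1}{n}\sum_{j=0}^{n-1}\Phi(T^jz)>c\}$ and the $\mathcal{V}^-$ inequality applied at $y$ gives $m(B_n(y,\eps))\geq Ce^{-n(\int\xi\,d\nu+3\gamma)}$. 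Summing over the disjoint balls and taking $\tfrac{1}{n}\log\liminf_{n\to\infty}$ yields
\begin{equation*}
\underline{R}(\Phi,(c,\infty))\geq h_\nu(T)-\int\xi\,d\nu-6\gamma,
\end{equation*}
and letting $\gamma\to 0$ concludes the proof. The main technical obstacle is the initial selection step, which requires a single finite convex combination of ergodic measures on $Y$ that simultaneously controls the entropy, the $\Phi$-integral with strict inequality preserved, and the $\xi$-integral; once that approximation is in place, the rest is essentially the bookkeeping already carried out for the set $A$ in the proof of Theorem \ref{thmhtop}.
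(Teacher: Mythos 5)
Your outline is viable, but it is a genuinely different and much heavier route than the paper's. The paper glues nothing: it invokes \cite[Theorem 4.3]{LO} (this is the only place shadowing enters) to replace $\nu$ by a \emph{single} ergodic measure $\mu$ close to $\nu$ in the weak$^*$ sense and in entropy, with $\int\Phi\,d\mu>c+3\delta$ and $h_\mu(T)-\int\xi\,d\mu$ close to $h_\nu(T)-\int\xi\,d\nu$. After that the argument is soft: by the Birkhoff theorem one finds $D$ with $\mu(D)>2/3$ on which the $\Phi$- and $\xi$-averages are controlled for all $n\geq N$; by the Brin--Katok/Katok formula a maximal $(n,2\eps)$-separated set $\mathcal{E}_n\subseteq D$ has cardinality at least $e^{n(h_\mu(T)-\gamma)}$ for every $n\geq N$; the pairwise disjoint balls $B_n(x,\eps)$, $x\in\mathcal{E}_n$, lie in $D_n$ and each has $m$-measure at least $Ce^{-n(\int\xi\,d\mu+\gamma)}$ by the $\mathcal{V}^-$ condition. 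Your proposal instead re-derives the needed ergodic approximation by hand via a $k$-block horseshoe construction; that buys self-containedness at the price of redoing the combinatorics of Theorem \ref{thmhtop}.

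Within your route there are two concrete gaps. First, the connecting $\delta$-chains do not all have one length $\omega$: their lengths depend on which elements of the cover contain $T^{n_i}x^{(i)}$ and $x^{(i+1)}$, so different tuples yield pseudo-orbits of different total lengths and the blocks do not occupy identical time windows --- which is precisely what your $(n,2\eps)$-separation argument requires. You must either pigeonhole each $E_i$ down to a fixed entry/exit pair of cover elements (as in the proof of Theorem \ref{thmhtop}, losing only a factor $S^2$ per block) or group tuples by length profile; neither affects the exponential rate, but one of them must be done. Second, $\underline{R}(\Phi,(c,\infty))$ is a $\liminf$ over \emph{all} $n$, whereas your construction bounds $m(D_n)$ only for $n$ in the range of $N\mapsto n(N)$, and this map skips integers. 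You need to pad: for an arbitrary large $n$ take the largest $N$ with $n(N)\leq n$ and use that $B_n(y,\eps)\subseteq B_{n(N)}(y,\eps)$ (so disjointness survives), that the extra $O(k)$ terms change the Birkhoff averages by $O(k/n)$, and that the $\mathcal{V}^-$ inequality is available at time $n$ directly. Both repairs are routine, but as written the proof does not close.
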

\begin{proof}
Fix some $\xi \in \mathcal{V}^-$, $c \in \mathbb{R}$ and $\Phi \in \mathcal{C}(X,\mathbb{R})$. Let
$$D_n = \{ x \in X: \frac{1}{n}\sum_{i=0}^{n-1}\Phi(T^ix)>c \}$$
 and pick an arbitrary $\nu \in \mathcal{M}$ supported on some chain recurrent class $Y\subseteq X$ with $\int\Phi d\nu >c$. Fix an arbitrary $\gamma>0$ and put $\delta = \frac{1}{4}(\int\Phi d\nu -c)$. Observe that, in fact, we have $\int \Phi d\nu = c+4\delta$.

By  Theorem 4.3 from \cite{LO} there exists a sequence of ergodic measures $\{\nu_n\}_{n \in \mathbb{N}}$ supported on some (possibly different) chain recurrent classes in $X$ such that $
\lim_{n\rightarrow\infty}\nu_n = \nu$ and $\lim_{n\rightarrow\infty}h_{\nu_n}(T)=h_{\nu}(T).$
Hence there exist $N_1, N_2>0$ such that:
\begin{align*}
\left|\int\Phi d\nu_n - \int \Phi d\nu\right| &< \delta \text{ and } \left|h_{\nu_n}(T)-h_{\nu}(T)\right|<\delta \text{ for all }n>N_1,\\
\left|\int \xi d\nu_n - \int \xi d\nu\right|& <\gamma \text{ and } |h_{\nu_n}(T)-h_{\nu}(T)|<\gamma \text{ for all }n>N_2.
\end{align*}
Put $M = \max\{N_1,N_2\}$ and denote
$$
\mu=\nu_M.
$$
Clearly, it follows that $\int \Phi d\mu > c+3\delta$ and $h_{\mu}(T) - \int \xi d\mu \geq h_{\nu}(T) - \int\xi d\nu - 2\gamma$.

Now we are going to prove that:
$$
\liminf_{n\rightarrow\infty}\frac{1}{n}\log m(D_n)\geq h_{\nu}(T) - \int \xi d\nu - 3\gamma,
$$
which requires estimating the value of $m(D_n)$ in terms of $h_{\mu}(T) - \int \xi d\mu$.
Choose $\eps>0$ such that $d(x,y)<\eps$ implies $|\Phi(x)-\Phi(y)|<\delta$.
If we denote by $N_T(n,2\eps,\frac{1}{2})$ the minimal number of $(n,2\eps)$-Bowen balls covering a set of $\mu$-measure at least $\frac{1}{2}$ 
and then by the Brin-Katok entropy formula we have (decreasing $\eps$ when necessary):
\begin{equation}
h_{\mu}(T)-\gamma/2 \leq \liminf_{n\rightarrow\infty}\frac{1}{n}\log N_T(n,2\eps,\frac{1}{2}).\label{eq:brin-katok}
\end{equation}

For $\xi \in \mathcal{V}^-$ chosen above and every $x\in X$ we have:
\begin{equation}\label{BBallMeasureEst}
\frac{1}{n}\log m(B_n(x,\eps))\geq \frac{1}{n}\log C - \frac{1}{n}\sum_{i=0}^{n-1}\xi(T^ix).
\end{equation}
As $\mu$ is ergodic, we have $\lim_{n\to\infty}\frac{1}{n}\sum_{i=0}^{n-1}\Phi(T^ix)>c+3\delta$ $\mu$-a.e.
Furthermore, note that if we take $x \in X$ such that $|\frac{1}{n}\sum_{i=0}^{n-1}\xi(T^ix)-\int\xi d\mu|<\gamma$ for all $n \geq N$ then, by (\ref{BBallMeasureEst}), we have:
\begin{equation}
\frac{1}{n}\log m(B_n(x,\eps))\geq -\frac{1}{n}\sum_{i=0}^{n-1}\xi(T^ix)>-\int \xi d\mu - \gamma.
\end{equation}
Again by ergodic theorem, $\mu$-a.e. point satisfies this condition for sufficiently large $N$.
In particular, there is an $N \in \mathbb{N}$ such that $\mu(D)>\frac{2}{3}$, where:
\begin{multline*}
D = \left\{ x \in X: 
|\frac{1}{n}\sum_{i=0}^{n-1}\xi(T^ix)-\int\xi d\mu|<\gamma \text{ and } \right. \\
\left. \frac{1}{n}\sum_{i=0}^{n-1}\Phi(T^ix)>c+\delta  \text{ for all } n \geq N \right\}.
\end{multline*}
Clearly $D\subseteq D_n$ for all $n\geq N$ and so $\mu(D_n)>\frac{2}{3}$. For each $x\in D$ and $y \in B_n(x,\eps)$ we have:
\begin{align*}
\frac{1}{n}\sum_{i=0}^{n-1}\Phi(T^iy)>-\delta + \frac{1}{n}\sum_{i=0}^{n-1}\Phi(T^ix)>c,
\end{align*}
therefore for each $n\geq N$ we have $B_n(x,\eps)\subseteq D_n$, provided that $x\in D$.

 Let $\mathcal{E}_n \subseteq D$ be the maximal $(n,2\eps)$-separated subset of $D$. Then for any distinct  $x, x' \in \mathcal{E}_n$
 we have $B_n(x,\eps)\cap B_n(x',\eps)=\emptyset$ and clearly $D\subset \bigcup_{x\in \mathcal{E}_n}B_{n}(x,2\eps)$, so:
 $$
 N_T(n,2\eps,\frac{1}{2})\leq |\mathcal{E}_n|.
 $$

 Thus, increasing the value of $N$ if necessary, by \eqref{eq:brin-katok} for $n\geq N$ we have:
 $$
 h_{\mu}(T)-\gamma \leq \frac{1}{n}\log |\mathcal{E}_n| 
 $$
 which we can equivalently write as:
 $$
 e^{n(h_{\mu}(T)-\gamma)}\leq |\mathcal{E}_n|.
 $$
Combining the above observations together, we see that for $n\geq N$:
 \begin{align*}
\frac{1}{n}\log m(D_n) &\geq  \frac{1}{n}\log \sum_{x \in \mathcal{E}_n}m(B_n(x,\eps)) \geq \frac{1}{n}\log \sum_{x \in \mathcal{E}_n}Ce^{-\sum_{i=0}^{n-1}\xi(T^ix)} \\
&\geq \frac{1}{n}\log C|\mathcal{E}_n|e^{-n(\int \xi d\mu +2\gamma)}
\geq \frac{1}{n}\log C e^{n(h_{\mu}(T)-\gamma)}e^{-n(\int\xi d\mu + 2\gamma)}.
 \end{align*}
 It follows that:
 \begin{align*}
 \liminf_{n\rightarrow\infty}\frac{1}{n}\log m(D_n)\geq h_{\mu}(T)-\int\xi d\mu -3\gamma
 \end{align*}
 which completes the proof.
\end{proof}

Now for $a\in \R$ and $\theta>0$ define the following two level sets:
\begin{align*}
R_{\Phi}(a) &= \left\{x \in X: \lim_{n\rightarrow\infty}\frac{1}{n}\sum_{i=0}^{n-1}\Phi(T^ix)=a\right\},\\
R_{\Phi}(a,\theta) &= \left\{x \in X: a-\theta<\liminf_{n\rightarrow\infty}\frac{1}{n}\sum_{i=0}^{n-1}\Phi(T^ix)\leq
\limsup_{n\rightarrow\infty}\frac{1}{n}\sum_{i=0}^{n-1}\Phi(T^ix)<a+\theta\right\}.
\end{align*}
By the definition $R_{\Phi}(a) = \bigcap_{\theta>0}R_{\Phi}(a,\theta)$, hence the set $R_{\Phi}(a)$ is much harder to deal with.

\begin{thm}\label{LevelSetThm}
Let $(X,T)$ be a dynamical system with the shadowing property and $\Phi \in \mathcal{C}(X,\mathbb{R})$, $a\in \R$ and $\theta>0$ be such that
$R_{\Phi}(a,\theta)\neq \emptyset$. Then
\begin{align*}
\htop(T,R_{\Phi}(a,\theta)) &= \sup \{ h_{\mu}(T):\int \Phi d\mu \in (a-\theta,a+\theta), \mu \in \mathcal{M}_e(X) \} \\
&= \sup \{h_{\mu}(T): \int \Phi d\mu \in (a-\theta,a+\theta), \supp(\mu)\subset Y \in  C_c(X,T) \}.
\end{align*}
\end{thm}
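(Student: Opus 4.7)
The plan is to sandwich $\htop(T,R_\Phi(a,\theta))$ between the two suprema on the right-hand side, which I will write as $S_1$ (the ergodic version) and $S_2$ (the invariant version with support in a chain recurrent class). I would first dispose of the bookkeeping identity $S_1=S_2$. The inequality $S_1\le S_2$ is immediate because any ergodic measure has support contained in a single chain recurrent class: the restriction of $T$ to $\supp(\mu)$ is topologically transitive (ergodicity gives $\mu(U\cap T^{-n}V)>0$ for non-empty relatively open $U,V$), hence chain transitive. For the reverse inequality $S_2\le S_1$, take $\mu$ witnessing $S_2$ and apply Theorem~4.3 of \cite{LO} exactly as in the proof of Theorem~\ref{thm:v-}, producing ergodic $\nu_n\to\mu$ with $h_{\nu_n}(T)\to h_\mu(T)$; openness of $(a-\theta,a+\theta)$ then forces $\int\Phi\,d\nu_n\in(a-\theta,a+\theta)$ for all large $n$, so $h_{\nu_n}(T)\le S_1$ and $h_\mu(T)\le S_1$ in the limit.

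Next I would establish the lower bound $\htop(T,R_\Phi(a,\theta))\ge S_1$. Fix any ergodic $\mu$ admissible for $S_1$; every $\mu$-generic point $x$ satisfies $\frac1n\sum_{i=0}^{n-1}\Phi(T^ix)\to\int\Phi\,d\mu\in(a-\theta,a+\theta)$, so the set $G_\mu$ of $\mu$-generic points lies inside $R_\Phi(a,\theta)$. Bowen's classical estimate from \cite{Bow} (the generic points of an ergodic measure carry Bowen entropy at least $h_\mu(T)$) together with monotonicity of $\htop$ then gives $\htop(T,R_\Phi(a,\theta))\ge h_\mu(T)$; taking the supremum over admissible $\mu$ completes this step.

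For the matching upper bound $\htop(T,R_\Phi(a,\theta))\le S_2$ I would reuse the $QR(t)$ argument from Corollary~\ref{CorSup}. Fix $x\in R_\Phi(a,\theta)$ and any $\mu\in V_T(x)$. Then $\supp(\mu)\subseteq\omega_T(x)$, and by \cite{HSZ} $\omega_T(x)$ sits inside some chain recurrent class, while $\int\Phi\,d\mu$ is a weak$^*$-limit of Birkhoff averages of $\Phi$ along the orbit of $x$ and therefore belongs to $[\liminf,\limsup]\subset(a-\theta,a+\theta)$. So every such $\mu$ is admissible for $S_2$, giving $h_\mu(T)\le S_2$, i.e.\ $x\in QR(S_2)$. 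Bowen's inequality $\htop(T,QR(t))\le t$ from \cite{Bow} then yields the desired upper bound.

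I expect Step~1, and specifically the direction $S_2\le S_1$, to be the key technical point: this is where the shadowing hypothesis enters nontrivially through the approximation result of \cite{LO}. The remaining steps are either soft (identifying the members of $V_T(x)$ as admissible measures for $S_2$, using the \cite{HSZ} description of $\omega$-limit sets) or direct applications of Bowen's classical estimates on $QR(t)$ and on the entropy of generic points.
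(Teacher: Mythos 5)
Your proposal is correct and follows essentially the same route as the paper: lower bound via Bowen's theorem on generic points of ergodic measures, upper bound via Bowen's $QR(t)$ estimate combined with the fact that every $\mu\in V_T(x)$ is supported in a chain recurrent class and has $\int\Phi\,d\mu$ in the open interval, and the identification of the two suprema via the ergodic approximation theorem of \cite{LO}. The only (harmless) difference is one of bookkeeping — you invoke the \cite{LO} approximation once, to prove $S_2\le S_1$, whereas the paper also threads it through the upper-bound step via internally chain transitive sets; the underlying ingredients are identical.
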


\begin{proof}
Given $x\in X$  as usual we denote by $V_T(x)$ the weak$^*$ limit set of $\frac{1}{n}\sum_{i=0}^{n-1}\delta_{T^ix}$. Recall that $\bigcup_{\mu \in V_T(x)}\supp(\mu)\subseteq \omega_T(x)$ so each $\mu\in V_T(x)$ is supported on some chain-recurrent class. Denote the set of generic points for $\mu$ by
$$
G_{\mu} = \{x \in X: V_T(x) = \{\mu\} \}.
$$
By the result of Bowen \cite[Theorem 3]{Bow} we have that $\htop(T,G_{\mu}) = h_{\mu}(T)$ provided that $\mu$ is ergodic. Note that if $\int \Phi d\mu \in (a-\theta,a+\theta)$ and $\mu$ is ergodic then the set of generic points for $\mu$ is a subset of $R_{\Phi}(a,\theta)$, which automatically implies:
$$
h_{\mu}(T)=\htop(T,G_{\mu})\leq \htop(T,R_{\Phi}(a,\theta))
$$
In other words
$$
\htop(T,R_{\Phi}(a,\theta)) \geq \sup \{ h_{\mu}(T):\int \Phi d\mu \in (a-\theta,a+\theta), \mu \in \mathcal{M}_e(X) \}.
$$
Recall that by the Bowen's result  from \cite[Theorem 2]{Bow} we know that: $$\htop(T,QR(t))~\leq~t,$$ where:
$$
QR(t) = \{ x \in X: \text{ there exists } \mu \in V_T(x) \text{ with }h_{\mu}(T)\leq t \}.$$
If we put
$$
\hat{t}=\sup_{x\in R_{\Phi}(a,\theta)}\sup_{\mu \in V_T(x)} h_\mu(T).
$$
then $R_{\Phi}(a,\theta)\subset QR(\hat{t})$ and so we have $\htop(T,R_{\Phi}(a,\theta))\leq \hat{t}$.
But for every $x\in X$ and $\mu \in V_T(x)$, by  \cite[Theorem~4.3]{LO} we can find a sequence of ergodic measures $\nu_n$ with $\nu_n\to \mu$ and $h_{\nu_n}(T)~\to~ h_\mu(T)$. By the definition, $\int \Phi d\nu_n~\to~\int \Phi d\mu$, and the support of every ergodic measure is internally chain recurrent,
so:
\begin{align*}
\htop(T,R_{\Phi}(a,\theta)) &\leq  \sup \{h_{\mu}(T): \int \Phi d\mu \in (a-\theta,a+\theta) \text{ and }\supp(\mu)\subseteq Z \\ & \text{ for some internally chain transitive set } Z\subseteq X \}\\
&\leq \sup \{h_{\mu} (T): \int \Phi d\mu \in (a-\theta,a+\theta)\text{ and }\supp(\mu)\subseteq Y\in C_c(T, X) \}.
\end{align*}

It remains to show that:
\begin{align*}
\sup&\{ h_{\mu}(T): \int \Phi d\mu \in (a-\theta,a+\theta), \mu \in \mathcal{M}_e(T) \} \\ \geq
&\sup \{ h_{\mu}(T): \int\Phi d\mu \in (a-\theta,a+\theta), \supp(\mu) \subset Y\in C_c(T,X)  \}.
\end{align*}

Take any $\eta>0$ and any invariant measure $\mu$ supported on some chain recurrent class in $X$ with $\int \Phi d\mu \in (a-\theta,a+\theta)$. Then by mentioned result of \cite{LO} we find an ergodic measure $\nu$ with $\int \Phi d\nu \in (a-\theta,a+\theta)$ and $h_{\nu}(T)>h_{\mu}(T)-\eta$ which completes the proof.
\end{proof}

Theorem~\ref{thm:v-} is motivated by result for maps with the specification property in \cite{Young}, in particular mixing maps with the shadowing property. The following example shows that it has also a chance to hold where there are numerous chain-recurrent classes. Of course a trivial example is provided by identity on Cantor set with any measure, but it would be nice to have a more sophisticated example.

\begin{ex}\label{exV}
	We are going to construct a map $f\colon [0,1]\to [0,1]$ such that $f$ has the shadowing property, infinitely many chain recurrent classes,
	Lebesgue measure $m$ on $[0,1]$ is not $f$-invariant, and $\mathcal{V}^-\neq \emptyset$. This shows that Theorem~\ref{thm:v-} can be satisfied
	also in the case when there are infinitely many chain-recurrent classes.
	
	Let $a_n=2^{-n}$ and $b_n=5a_{n+1}/4$ for $n=0,1,\ldots$. Let $\lambda_n\in (\sqrt{2},2]$ be any sequence such that tent map with slope $\lambda_n$
	has shadowing property. It equivalently means, that this map satisfies the so called linking property \cite{Chen}, and by results of \cite{Yorke} most of slopes in $(\sqrt{2},2]$ satisfies it.
	We also assume that $\lambda_0=2$, which is admissible, since full tent map has the shadowing property.
	Let $f\colon [0,1]\to [0,1]$ be defined as follows. On the interval $[b_n,a_n]$ the map $f$ is the tent map with slope $\lambda_n$ and on $[a_{n+1},b_n]$ we have affine map with $f(a_{n+1})=b_{n+1}$ and $f(b_{n})=b_n$. Note that $f(b_n)-f(a_{n+1})=b_n-{b_{n+1}}=5a_{n+1}/4$ which shows that $f$ has slope $5$ on each of the intervals $[a_{n+1},b_n]$.
	This shows that $f$ is Lipschitz with Lipschitz constant $5$, and so if $m$ is Lebesgue measure, then for every $x$ and every $\eps>0$ we have:
	$$
	m(B_n(x,\eps))\geq \eps 5^{-n}=\eps e^{-n \ln (5)}.
	$$
	Then clearly $\mathcal{V}^{-}$ contains any function $\xi\geq \ln(5)$ but of course contains other functions as well.
	
	To see that $f$ has shadowing, it is enough to apply Theorem~7 from \cite{Chris}. Simply, let $\pi_{n}\colon [a_{n+1},a_0]\to [a_n,a_0]$ be defined for $n>0$
	by $\pi_n(x)=\max\{x,a_{n}\}$. In other words $\pi_n$ collapses $[a_{n+1},a_n]$ to the point $a_n$. Note that $f_n\circ \pi_n=\pi_n\circ f_{n+1}$
	where $f_n=f|_{[a_n,a_0]}$. Also each $f_n$ has shadowing property, as it is piecewise linear and by definition it has linking property (since $f$ on each $[a_{b},a_n]$ has it).
	Coordinate-wise action of $f_n$ on the inverse limit $\varprojlim(\pi_n, [a_n,a_0])$ has the shadowing property. But it is also easy too check that by the definition it is conjugate with $(f,[0,1])$.
\end{ex}

\section{Acknowledgements}
M. Fory\'{s}-Krawiec was supported from ESF "Strengthening the university's scientific capacities" (No. CZ.02.2.69/0.0/0.0/16\_027/0008472). 
P. Oprocha was partially supported by the Faculty of Applied Mathematics AGH UST statutory tasks within subsidy of Ministry of Science and Higher Education and by NPU II project LQ1602 IT4Innovations excellence
in science  and project lRP201824 ``Complex topological structures''.
J. Kupka was also supported by the project lRP201824 ``Complex topological structures''.
X. Tian was supported by National Natural Science Foundation of China (grant no. 11671093).

\end{document}